\newtheorem{theorem}{Theorem}[section]
\newtheorem{lemma}[theorem]{Lemma}
\theoremstyle{definition}
\newtheorem{assumption}{Assumption}[section]
\newtheorem{definition}[theorem]{Definition}
\newtheorem{example}[theorem]{Example}
\newtheorem{remark}[theorem]{Remark}
\newtheorem{framework}{Framework}[section]
\newcommand{\exclude}[1]{}
\newcommand{\proj}{p}
\newcommand{\1}{{\mathbf{1}}}
\newcommand{\E}{{\mathbb{E}}}
\newcommand{\N}{{\mathbb{N}}}
\renewcommand{\P}{{\mathbb{P}}}
\newcommand{\R}{{\mathbb{R}}}
\newcommand{\F}{\mathbb{F}}
\newcommand{\sign}{\operatorname{sign}}
\definecolor{darkgreen}{rgb}{0,0.5,0}
\definecolor{lightgreen}{rgb}{0.5,0.9,0.5}
\definecolor{magenta}{rgb}{0.75,0,0.25}
\definecolor{violet}{rgb}{0.25,0,0.75}
\newcommand{\hypsurf}{\Theta}
\newcommand{\appx}{X^h}
\newcommand{\appxy}{X^{h}}
\newcommand{\appz}{Z^h}
\newcommand{\eu}{{\underline{u}}}
\newcommand{\es}{{\underline{s}}}
\newcommand{\et}{{\underline{t}}}
\newcommand{\tr}{\operatorname{tr}}
\newcommand{\drift}{A}
\newcommand{\diff}{B}
\newcommand{\smallconsti}{\left(\frac{4}{3 c_0}\right)}
\renewcommand{\P}{{\mathbb P}}
\newcommand{\cF}{{\cal F}}
\newcommand{\be}{\begin{equation}}
\newcommand{\ee}{\end{equation}}
\newcommand{\bea}{\begin{eqnarray}}
\newcommand{\eea}{\end{eqnarray}}
\newcommand{\beast}{\begin{eqnarray*}}
\newcommand{\eeast}{\end{eqnarray*}}
\newcommand{\bproof}{\begin{proof}}
\newcommand{\eproof}{\end{proof}}
\title{An adaptive Euler-Maruyama scheme for stochastic differential equations with discontinuous drift and its convergence analysis}
\author{Andreas Neuenkirch \and Michaela Sz\"olgyenyi \and Lukasz Szpruch}
\date{Preprint, April 2019}
\begin{document}

\maketitle


\begin{abstract}
We study the strong approximation of stochastic differential equations with discontinuous drift coefficients and (possibly) degenerate diffusion coefficients. To account for the discontinuity of the drift coefficient we construct an adaptive step sizing strategy for the explicit Euler-Maruyama scheme. As a result, we obtain a numerical method which has -- up to logarithmic terms -- strong convergence order $1/2$ with respect to the average computational cost.  We support our theoretical findings with several numerical examples.\\

\noindent Keywords: stochastic differential equations, discontinuous drift, degenerate diffusion, adaptive Euler-Maruyama scheme, strong convergence order\\
Mathematics Subject Classification (2010): 60H10, 65C30, 65C20, 65L20
\end{abstract}


\section{Introduction and Main Results}\label{sec:intro}

In this manuscript, we consider the strong approximation of  time-homogeneous It\=o-stochastic differential equations (SDEs) of the form
\begin{align}\label{eq:sde}
dX_t=\mu(X_t) dt + \sigma(X_t) dW_t, \quad t \geq 0, \qquad X_0=x,
\end{align} where $x\in \R^d$ is the initial value,  $\mu\colon\R^d \rightarrow \R^d$ is the drift coefficient, $\sigma\colon\R^d \rightarrow \R^{d , d}$ is the diffusion coefficient and $W=(W_{t})_{t \geq 0}$ is a $d$-dimensional Brownian motion.
In contrast to most of the analysis in the literature, we allow 
\begin{itemize}\setlength{\itemsep}{0em}
 \item[(i)] the drift coefficient $\mu$ to be discontinuous on a hypersurface $\hypsurf$,
 \item[(ii)] and the diffusion coefficient $\sigma$ to be degenerate outside $\hypsurf$.
\end{itemize}

Our aim is to construct a numerical scheme, which is relatively easy to implement and has root mean square convergence order $1/2$ in terms of the computational cost for a large class of SDEs. 
So far only the transformation-based Euler-Maruyama scheme given in \cite{sz2016b} for SDE \eqref{eq:sde} is known to have this property for multi-dimensional SDEs.
\\

To state our main results denote the distance to the exceptional set $\hypsurf$ by $$d(x,\hypsurf)=\inf\{\|x-y\|:y \in \hypsurf\}, \quad x\in\R^d,$$ and for every $\varepsilon>0$ define
$$\hypsurf^\varepsilon:=\{x\in \R^d: d(x,\hypsurf)<\varepsilon\}.$$ We  consider the  adaptive Euler-Maruyama scheme given by
\begin{align} \label{euler_anf}
 \appx_0 &=x,\qquad  \tau_0 = 0,
 \end{align}
 and
 \begin{align} \label{euler_it}
\appx_{\tau_{k+1}} = \appx_{\tau_k}+ \mu(\appx_{\tau_k})(\tau_{k+1}-\tau_k) + \sigma(\appx_{\tau_k})(W_{\tau_{k+1}}-W_{\tau_k}), \qquad
 \tau_{k+1}&= \tau_{k} + h(\appx_{\tau_k},\delta),
\end{align}
with $ k\in \N_0$, and step size function $h\colon\R^d\times (0,1)\to(0,1)$,

\begin{align}\label{eq:step size}
  h(x,\delta)=\begin{cases}
    \delta^2,\, & x \in \hypsurf^{\varepsilon_2},\\
   \frac{1}{\sup_{x \in \hypsurf^{\varepsilon_0}} \| \sigma(x) \|^2} \left( \frac{d(x,\hypsurf)}{  \log (1/\delta)}\right)^2, \, & x \in \hypsurf^{\varepsilon_1} \backslash  \hypsurf^{\varepsilon_2},\\
   \delta,\, & x \notin  \hypsurf^{\varepsilon_1},
  \end{cases}
 \end{align} 
where \begin{align} \label{eq:step_bd}\varepsilon_1=\sup_{x \in \hypsurf^{\varepsilon_0}} \| \sigma(x) \|  \log(1/\delta)\sqrt{\delta}, \qquad \varepsilon_2=\sup_{x \in \hypsurf^{\varepsilon_0}} \| \sigma(x) \| \log(1/\delta)\delta, \end{align}
with $\delta \in (0,1)$ and $\varepsilon_0 > 4 \varepsilon_1 > 4 \varepsilon_2$. Note that $\tau$ depends on $h(\cdot, \delta)$ and also $\varepsilon_1, \varepsilon_2$ depend on $\delta$, but to simplify the notation we suppress this dependence.
For  mathematical convenience we will work with the continuous time Euler-Maruyama scheme, i.e.~between discretization points we set
\begin{align} \appx_t &= \appx_{\tau_k}+ \mu(\appx_{\tau_k})(t-\tau_k) + \sigma(\appx_{\tau_k})(W_{t}-W_{\tau_k}) , \quad  t \in [\tau_k, \tau_{k+1}]. \label{eq:euler-timecont} \end{align}

Obviously this scheme uses smaller steps close to the discontinuities, has maximal step size $\delta$, minimal step size $\delta^2$, and interpolates both step sizes in an intermediate regime. The step sizing strategy arises from optimally balancing Gaussian tail estimates and occupation time estimates of the Euler-Maruyama scheme, which in particular accounts for the log-terms.\\

The computational cost of $\appx$ on $[0,T]$, i.e.~the number of arithmetic operations, function evaluations, in particular of $\mu$, $\sigma$ and $h$, and random numbers, is  proportional to the number of steps which are needed to reach time $T$, that is
\begin{align} N(h,\delta)= \min \{k\in \mathbb{N}: \,  \tau_k \geq T \}. \end{align}
We will use this quantity as a proxy for the computational cost of the scheme. Clearly, the evaluation of $h$ might be a non-trivial problem, which is however out of the scope of the present work.

We will work under mild assumptions, i.e. 
\begin{itemize}\setlength{\itemsep}{0em}
 \item[(i)] $\mu$ is supposed to be piecewise Lipschitz and its discontinuity set $\hypsurf$ is a sufficiently regular hypersurface,
 \item[(ii)] $\sigma$ is globally Lipschitz,
 \item[(iii)] $\mu$ and $\sigma$  satisfy a geometric smoothness and boundedness condition close to $\hypsurf$,
\end{itemize}
see  Assumption \ref{ass:existence}.\\

For fixed $T>0$ we will show that
\begin{align}\left( \mathbb{E} \left[\sup_{s \in [0,T]} \|X_s-\appx_s\|^2\right] \right)^{1/2} \leq C_{\textrm{rmse}} \cdot \sqrt{1+\log(1/\delta)} \sqrt{\delta}, \end{align}
see Theorem \ref{thm:conv}, and
\begin{align} \E [N(h,\delta)] \leq C_{\textrm{cost}} \cdot (1+\log(1/\delta))\delta^{-1}, \end{align}  see Theorem \ref{thm:cost},
for some constants $C_{\textrm{rmse}}, C_{\textrm{cost}} >0$ depending only on $\mu$, $\sigma$, $\hypsurf$, $T$, $x$.

So up to logarithmic terms the adaptive Euler-Maruyama scheme recovers the classical order $1/2$ with respect to the average computational cost.
Note that both theorems remain valid if
the quantity $\sup_{x \in \hypsurf^{\varepsilon_0}} \| \sigma(x) \|$ is  replaced by an upper bound  in the definition of $\varepsilon_1, \varepsilon_2$ and $h$.

The remainder of this article is structured as follows: in the following subsections we briefly review recent results on the approximation of SDEs with discontinuous coefficients and adaptive numerical methods for SDEs. Sections \ref{sec:preliminaries} and \ref{prop:Euler} contain preliminary and auxiliary results, while Sections \ref{sec:convergence} and \ref{sec:cost} contain the error and cost analysis.
Section \ref{sec:examples} provides some numerical examples.

\subsection{Numerical methods for SDEs with discontinuous coefficients}\label{history}

Typically, existence and uniqueness results for SDEs only allow discontinuities in the drift coefficient, but not in the diffusion coefficient, see, e.g.,
\cite{
veretennikov1984} and the recent works \cite{sz14,sz2016a}. Thus -- unless otherwise mentioned -- the diffusion coefficient is globally Lipschitz for the following methods and results.

Up to the best of our knowledge the first contribution is
 \cite{gyongy1998}. In this work, the almost sure convergence for an SDE with possibly discontinuous drift coefficient is established, as long the drift coefficient is still one-sided Lipschitz, the diffusion coefficient is locally Lipschitz and there exists
 a Lyapunov function for the SDE. For SDEs with additive noise the results of \cite{halidias2008}  provide   strong  convergence of the Euler-Maruyama scheme for  discontinuous, but monotone drift coefficients.

Recently several contributions for strong approximation have been given in  a series of articles of Ngo and Taguchi   \cite{ngo2016a,ngo2016b, ngo2016c} and   Leobacher and  Sz\"olgyenyi  \cite{sz15,sz2016b,sz2017c}.
For the multi-dimensional SDE \eqref{eq:sde} these works provide 
\begin{itemize}\setlength{\itemsep}{0em}
 \item[(i)] the $L^1$-convergence order $1/2$  for the equidistant Euler-Maruyama scheme, if $\mu$ is one-sided Lipschitz and an appropriate limit of smooth functions, and $\sigma$ is bounded and uniformly non-degenerate, see \cite{ngo2016c},
 \item[(ii)] the $L^2$-convergence order $1/4-\epsilon$ for arbitrarily small $\epsilon>0$ for the equidistant Euler-Maruyama scheme under Assumption  \ref{ass:existence}  and additionally the boundedness of $\mu$ and $\sigma$, see \cite{sz2017c},
 \item[(iii)] the $L^2$-convergence order $1/2$ for a transformation based Euler-Maruyama method under Assumption  \ref{ass:existence}, see \cite{sz2016b}. However, this transformation is in general difficult to compute, which limits its applicability.
 \end{itemize}

After our work was prepared and submitted, M\"uller-Gronbach and Yaroslavtseva obtained for scalar SDEs  and the equidistant Euler scheme the $L^p$-convergence order $1/2$, for any $p \geq 1$,  see \cite{TMG-Y}. Whether such an improvement of (ii) is possible also in the multi-dimensional case,  presently remains an open question.

The weak approximation of SDEs with discontinuous coefficients has been studied in \cite{kohatsu2013}, where an Euler-type scheme  based on an SDE with mollified drift coefficient is analyzed, and in \cite{frikha}, where an error bound for the density of the Euler-Maruyama scheme for (skew) diffusions is obtained.
Finally, for scalar SDEs with additive noise, \cite{etore2014} provides a simulation scheme based on an approximation by skew perturbed SDEs.
\medskip

\subsection{Adaptive step sizing procedures for strong approximation of SDEs}

Adaptive timestepping strategies have turned out to be very effective in probabilistic numerical analysis, though their error  analysis typically provides mathematical challenges.

\begin{itemize}\setlength{\itemsep}{0em}
 \item 
The early works on adaptive methods propose strategies based on local error estimators in analogy to numerical methods for ordinary differential equations, see, e.g.,
 \cite{MR1470933}, \cite{burx2}, and \cite{MR2017023}.
 \item 
Adaptive methods have also been used to preserve  ergodic properties of the underlying SDE, see, e.g., \cite{MR2337577,MR1931266}. 
In fact,  to recover ergodicity by adaptivity has already been proposed in \cite{MR1440273}. 
\item 

For optimal approximation of SDEs in the Information-Based-Complexity framework adaptive methods have been exhaustively analyzed in a series of articles  \cite{MR2099646,MR1910644,MR1817611,MR1677407}.
In these works,  optimal convergence rates and asymptotically optimal schemes  have been established for various error criteria.
\item 
Finally, step adaptation strategies for SDEs with non-globally Lipschitz coefficients have been studied in \cite{kelly,FangGiles}. This research has been partially motivated for the purpose of multilevel Monte Carlo simulations \cite{lester2015adaptive,hoel2012adaptive}.

\end{itemize}

\section{Preliminaries}
\label{sec:preliminaries}

In this section we present some notions from differential geometry and analysis, state the assumptions we make on the coefficients of  SDE \eqref{eq:sde} together with an existence and uniqueness result, and present a Krylov-type estimate for It\=o processes.

All stochastic variables introduced in the following are assumed to be defined on the filtered probability space
$(\Omega,\cF,\F,\P)$, where $\F=(\F_t)_{t\ge 0}$ is a normal filtration. In particular, $W$ is a $d$-dimensional $(\Omega,\cF,\F,\P)$-Brownian motion.

\subsection{Definitions from differential geometry and analysis}

In order to allow for discontinuities of the drift, we replace the usual global Lipschitz condition by the \emph{piecewise Lipschitz condition}, which was first introduced in \cite{sz2016b}.
For this, we recall two definitions.

\begin{definition}[\text{\cite[Definitions 3.1 and 3.2]{sz2016b}}]
Let $A\subseteq \R^d$.
\begin{enumerate}
\item For a continuous curve $\gamma\colon [0,1]\to \R^d$, let  $\ell(\gamma)$ denote its length, i.e.
\[
\ell(\gamma)=\sup_{n \in \mathbb{N}, \, 0\le t_1<\ldots<t_n\le 1}\sum_{k=1}^n \|\gamma(t_k)-\gamma(t_{k-1})\|.
\] 
The {\em intrinsic metric} $\rho$ on $A$ 
is given by 
\[
\rho(x,y):=\inf\{\ell(\gamma)\colon \gamma\colon[0,1]\to A \text{ is a continuous curve satisfying } \gamma(0)=x,\, \gamma(1)=y\},
\]
where $\rho(x,y):=\infty$, if there is no continuous curve from $x$
to $y$.  
\item Let $f\colon A\to \R^m$ be a function.
We say that $f$ is {\em intrinsic Lipschitz}, if it is Lipschitz w.r.t. the
intrinsic metric on $A$, i.e.~if there exists a constant $L>0$ such that
\[
\forall x,y\in A\colon \|f(x)-f(y)\|\le L \rho(x,y).
\]
\end{enumerate}
\end{definition}

Of course every Lipschitz function is intrinsic Lipschitz, but the reverse does not hold. 

\begin{definition}[\text{\cite[Definition 3.4]{sz2016b}}]\label{def:pw-lip}
A function $f\colon \R^d\to\R^m$ is {\em piecewise Lipschitz}, if
there exists a hypersurface $\hypsurf$ with finitely many connected 
components and with the property, that
the restriction $f|_{\R^d\backslash \hypsurf}$ is intrinsic Lipschitz.
We call $\hypsurf$ an {\em exceptional set} for $f$,
and we call
\[
\sup_{x,y\in \R^d\backslash\hypsurf}\frac{\|f(x)-f(y)\|}{\rho(x,y)}
\]
the \emph{piecewise Lipschitz constant} of $f$.
\end{definition}

The following example shows, why it is necessary to resort to the intrinsic metric in the definition of the piecewise Lipschitz condition.
\begin{example}\label{ex:pw-lip-intrinsic}
Consider a function $f\colon \R^2\to \R^2$ which is piecewise Lipschitz and discontinuous at $\hypsurf=\{(x_1,x_2)\in \R^2\colon x_1<0\}$.
To obtain a Lipschitz estimate of, e.g., $\|f(-x_1,x_1)-f(-x_1,-x_1)\|$ for $x_1\in\R$, the Euclidean metric cannot be used, since the direct connection of $(-x_1,x_1)$ and $(-x_1,-x_1)$ crosses $\hypsurf$. The intrinsic metric provides a Lipschitz estimate with a connecting curve that lies in $\R^2\backslash\hypsurf$.
\end{example}

In the following, we consider piecewise Lipschitz functions with exceptional set $\hypsurf$, where $\hypsurf$ is a fixed, sufficiently regular hypersurface, see Assumption \ref{ass:existence}.\ref{ass:existence-pwlip} below.
We denote the Lipschitz constant of a function $f$ if it is finite, and otherwise its piecewise Lipschitz constant, by $L_f$.
For a function $f\colon\R^d \rightarrow \R$ we denote
$\|f\|_{\infty,\hypsurf^{\varepsilon_0}}:= \sup_{x \in \hypsurf^{\varepsilon_0}} \| f(x)\|$.

If $\hypsurf\in C^4$, locally there exists a unit normal vector, that is a continuously differentiable $C^3$
function $n\colon U\subseteq\Theta\to \R^d$ such that for every $\zeta\in U$, $\|n(\zeta)\|=1$, and $n(\zeta)$ is orthogonal to the tangent space of $\hypsurf$ in $\zeta$.

Recall the following definition from differential geometry:

\begin{definition}\label{def:positivereach}
Let $\hypsurf \subseteq \R^d$. 
\begin{enumerate}
\item \label{def:ucpp} 
An environment $\hypsurf^\varepsilon$ is said to have the 
{\em unique closest point property}, if for every $x\in \R^d$
with $d(x,\hypsurf)<\varepsilon$ there is a unique $\proj\in \hypsurf$ with
$d(x,\hypsurf)=\|x-\proj\|$.
Therefore, we can define a mapping 
$\proj\colon\hypsurf^{\varepsilon}\to \hypsurf$ assigning to each $x$
the point $\proj(x)$ in $\hypsurf$, which is closest to $x$.
\item 
A set $\hypsurf$ is said to be of {\em positive reach}, if there exists  
$\varepsilon>0$ such that  $\hypsurf^\varepsilon$ has the  
unique closest point property.
The {\em reach} $r_{\hypsurf}$ of $\hypsurf$ is the supremum 
over all such $\varepsilon$ if such an $\varepsilon$ exists, and 
0 otherwise. 
\end{enumerate}

\end{definition}

\subsection{Existence of a unique strong solution}
\label{def_g_etal}

The main results in this paper require the following set of assumptions:
\begin{assumption}\label{ass:existence}
We assume for the coefficients $\mu \colon \mathbb{R}^d \rightarrow \mathbb{R}^d$ and $\sigma\colon \mathbb{R}^d \rightarrow \mathbb{R}^{d , d}$ of SDE \eqref{eq:sde}:
\begin{enumerate}\setlength{\itemsep}{0em}
\item \label{ass:existence-sigmalip} the diffusion coefficient $\sigma$  is Lipschitz;
\item \label{ass:existence-pwlip} the drift coefficient $\mu$ is a piecewise Lipschitz function;
its exceptional set $\hypsurf$ is a $C^4$-hypersurface with reach $r_{\hypsurf}> \varepsilon_0$ for some $\varepsilon_0>0$ and every unit normal vector $n$ of $\hypsurf$ has bounded second and third derivative;
\item\label{ass:existence-musigmalocbounded} the coefficients $\mu$ and $\sigma$ satisfy
$$  \sup_{x \in \hypsurf^{\varepsilon_0}} (\|\mu(x)\| + \|\sigma(x)\|) < \infty;     $$
\item \label{ass:existence-nonparallelity} {\em (non-parallelity condition)}\,
there exists a constant $c_0>0$ such that $\|\sigma(\xi)^\top n(\xi)\|\ge c_0$ for
all $\xi\in \hypsurf$;
\item \label{ass:existence-alpha} the function
\begin{align}\label{eq:alphad}
\alpha\colon \hypsurf \rightarrow \R^d, \qquad \alpha(\xi)=\lim_{h\to 0+}\frac{\mu(\xi-h n(\xi))-\mu(\xi+hn(\xi))}{2 \|\sigma(\xi)^\top n(\xi)\|^2}
\end{align} is well defined, bounded, and
belongs to $C^3_b(\hypsurf;\mathbb{R}^d)$.
\end{enumerate}
\end{assumption}
Note that $\mu$ and $\sigma$ satisfy a linear growth condition due to Assumptions \ref{ass:existence}.\ref{ass:existence-sigmalip}, \ref{ass:existence}.\ref{ass:existence-pwlip}, and \ref{ass:existence}.\ref{ass:existence-musigmalocbounded}.

\paragraph{Remark on Assumption \ref{ass:existence}:}
\hfill
\begin{enumerate}\setlength{\itemsep}{0em}
 \item Assumption \ref{ass:existence}.\ref{ass:existence-pwlip} is needed to be able to locally flatten $\hypsurf$ to a plane in a regular way.
 Furthermore, it guarantees that $n'$ is bounded, see \cite[Lemma 3.10]{sz2016b}.
 \item Assumption \ref{ass:existence}.\ref{ass:existence-nonparallelity} ensures that $\sigma(\xi)$ has a component orthogonal to $\hypsurf$ for all $\xi \in \hypsurf$. It is significantly weaker than the uniform ellipticity condition which is usually required in the literature on SDEs with discontinuous drift;
 \item Assumption \ref{ass:existence}.\ref{ass:existence-alpha} is a technical
condition that is required for the transformation method from  \cite{sz2016b}, which is the basis of our convergence proof, to work.
\item Assumption \ref{ass:existence} is satisfied if, e.g.,
\begin{enumerate}[(i)]
 \item the exceptional set 
$\hypsurf$ is a  compact set. Note that then its complement satisfies
$\R^d\backslash\hypsurf=A_1\cup\dots\cup A_n$
where $A_1, \ldots, A_n$ are open and connected subsets of $\R^d$,
\item
there exist  Lipschitz $C^3$-functions 
$\mu_1,\dots,\mu_n\colon\R^d\longrightarrow\R^d$
such that $\mu=\sum_{k=1}^n \1_{A_k}\mu_k$ and $\sigma$ is Lipschitz and $C^3$,
\item and Assumptions \ref{ass:existence}.\ref{ass:existence-musigmalocbounded} and  \ref{ass:existence}.\ref{ass:existence-nonparallelity} hold.
\end{enumerate}
Compare Example 2.6 in \cite{sz2017c}.
\end{enumerate}

\begin{theorem}[\text{\cite[Theorem 3.21]{sz2016b}}]
Let Assumption \ref{ass:existence} hold. Then SDE \eqref{eq:sde} has a unique strong solution.
\end{theorem}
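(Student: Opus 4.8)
The plan is to prove both existence and uniqueness by the \emph{transformation method} of \cite{sz2016b}: I would construct a global change of coordinates $G\colon\R^d\to\R^d$ that absorbs the discontinuity of $\mu$, so that the transformed process $Z_t=G(X_t)$ solves an SDE with globally Lipschitz coefficients, to which the classical existence-and-uniqueness theory applies, and then invert the transformation.

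First I would build $G$. By Assumption~\ref{ass:existence}.\ref{ass:existence-pwlip} the set $\hypsurf$ is a $C^4$-hypersurface of reach $r_\hypsurf>\varepsilon_0$, so on $\hypsurf^{\varepsilon_0}$ the closest-point projection $\proj$ and a signed distance function $\sdist$ (with $\nabla\sdist=n\circ\proj$) are well defined and inherit the regularity of $\hypsurf$ and of its unit normal $n$. With a smooth cut-off $\phi$ supported in $(-\varepsilon_0,\varepsilon_0)$, I would set, up to a sign and normalising constant,
\[
G(x)=x+\phi(\sdist(x))\,\alpha(\proj(x))\,\sdist(x)\,|\sdist(x)|,
\]
with $\alpha$ the field from Assumption~\ref{ass:existence}.\ref{ass:existence-alpha}. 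The piecewise-quadratic profile $\sdist\,|\sdist|$ is $C^1$ with derivative vanishing at $\sdist=0$, so $G$ is $C^1$ with $\nabla G$ continuous and equal to the identity on $\hypsurf$, while $\nabla^2 G$ jumps across $\hypsurf$ by an amount proportional to $\alpha(\proj(\cdot))\,n\,n^\top$. Since $\alpha\in C^3_b(\hypsurf;\R^d)$ and $\proj,\sdist,n$ are bounded in $C^3$ near $\hypsurf$ (using Assumptions~\ref{ass:existence}.\ref{ass:existence-pwlip} and~\ref{ass:existence}.\ref{ass:existence-alpha}), $G$ has bounded, piecewise-$C^2$ derivatives and equals the identity off $\hypsurf^{\varepsilon_0}$. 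Shrinking the support of $\phi$ makes $\nabla G$ uniformly close to the identity, so $G$ is a global $C^1$-diffeomorphism by a Hadamard-type global inversion argument, and both $G$ and $G^{-1}$ are Lipschitz.

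Next I would compute the SDE satisfied by $Z_t=G(X_t)$. Because $G$ is only $C^1$ across $\hypsurf$, I would invoke a generalized It\^o formula valid for $C^1$ maps with absolutely continuous gradient; its use is justified here because Assumption~\ref{ass:existence}.\ref{ass:existence-nonparallelity} forces $\|\sigma(\xi)^\top n(\xi)\|\ge c_0>0$, so the scalar process $\sdist(X_t)$ is a nondegenerate semimartingale possessing a local time at $0$ and spending zero time on $\hypsurf$. The drift of $Z$ then reads $\nabla G\,\mu+\tfrac12\trace(\sigma\sigma^\top\nabla^2 G)$: the first summand inherits the normal jump of $\mu$, the second inherits the jump of $\nabla^2 G$, and the defining formula \eqref{eq:alphad} for $\alpha$ is precisely the choice that makes these two jumps cancel, since $n^\top\sigma\sigma^\top n=\|\sigma^\top n\|^2$. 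Hence the transformed drift $\tilde\mu=(\nabla G\,\mu+\tfrac12\trace(\sigma\sigma^\top\nabla^2 G))\circ G^{-1}$ is continuous, and together with the diffusion $\tilde\sigma=(\nabla G\,\sigma)\circ G^{-1}$ it is globally Lipschitz, using Assumptions~\ref{ass:existence}.\ref{ass:existence-sigmalip} and~\ref{ass:existence}.\ref{ass:existence-musigmalocbounded} and the bounded piecewise-$C^2$ regularity of $G$.

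Finally, I would apply the classical theorem for SDEs with globally Lipschitz coefficients to obtain a unique strong solution $Z$ of $dZ_t=\tilde\mu(Z_t)\,dt+\tilde\sigma(Z_t)\,dW_t$ with $Z_0=G(x)$, and set $X_t=G^{-1}(Z_t)$; the generalized It\^o formula applied to $G^{-1}$ shows $X$ solves \eqref{eq:sde}. Conversely, any strong solution $\tilde X$ of \eqref{eq:sde} yields $G(\tilde X)$ as a strong solution of the transformed SDE, so $G(\tilde X)=Z$ and $\tilde X=X$, giving uniqueness. I expect the main obstacle to be this third step: rigorously justifying the generalized It\^o formula for the merely $C^1$ transformation and verifying that the normal jumps cancel exactly. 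This is precisely where the non-parallelity condition (for the nondegeneracy of $\sdist(X_t)$ and the well-definedness of $\alpha$) and the $C^3_b$-regularity of $\alpha$ together with the $C^4$-regularity of $\hypsurf$ are indispensable.
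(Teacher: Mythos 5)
Your proposal is correct and follows essentially the same route as the paper, which cites \cite[Theorem 3.21]{sz2016b} and outlines exactly this transformation method in Subsection \ref{def_g_etal}: your map $G$ (with the profile $\sdist(x)\,|\sdist(x)|$ times a cut-off) coincides with the paper's \eqref{eq:G}--\eqref{eq:varphi}, your jump-cancellation computation is precisely the purpose of $\alpha$ in \eqref{eq:alphad} (cf.\ Lemmas \ref{lem:propG} and \ref{lem:propG_lip}), and your existence/uniqueness argument via the Lipschitz SDE \eqref{eq:SDEtransf} and inversion $X=G^{-1}(Z)$ matches the paper's. You also correctly identify the delicate point, namely justifying It\^o's formula for the merely $C^1$ map $G$ via non-degeneracy in the normal direction, which is the content of \cite[Theorem 3.19]{sz2016b} (Lemma \ref{lem:propG}(\ref{it:Gito})).
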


The proof of the above theorem and also the proof of our convergence result rely on a mapping $G\colon \mathbb{R}^d \rightarrow \mathbb{R}^d$, which transforms the SDE for $X$ in another SDE which has Lipschitz coefficients.
More precisely, we define  
\begin{align}\label{eq:G}
G(x)=\begin{cases}
 x+\varphi(x) \alpha(\proj(x)),&  x\in \hypsurf^{\varepsilon_0},\\
x, & x\in \R^d\backslash \hypsurf^{\varepsilon_0},
\end{cases}
\end{align}
with $r_{\hypsurf}>\varepsilon_0>0$, see 
Assumption \ref{ass:existence}.\ref{ass:existence-pwlip}, 
$\alpha$ as in Assumption \ref{ass:existence}.\ref{ass:existence-alpha}, and 
\begin{align}  \label{eq:varphi}
 \varphi(x)=n(\proj(x))^\top(x-\proj(x))
\|x-\proj(x)\|\phi\left(\frac{\|x-\proj(x)\|}{c}\right),                                                                                                                                       
 \end{align}
with a constant $c>0$ and $\phi\colon \R \to \R$,
\begin{align*}
\phi(u)=
\begin{cases}
(1+u)^4 (1-u)^4, & |u|\le 1,\\
0, & |u|> 1.
\end{cases}
\end{align*}
The map $G$ has the following properties: \newpage
\begin{lemma}\label{lem:propG}
Let Assumption \ref{ass:existence} hold. Then we have
\begin{enumerate}[(i)]\setlength{\itemsep}{0em}
 \item\label{it:C1} $G\in C^1(\mathbb{R}^d,\mathbb{R}^d)$;
 \item\label{it:GpwLip} $G'$ is Lipschitz, $G''$ exists on $\mathbb{R}^d \setminus \hypsurf$ and is piecewise Lipschitz with exceptional set $\hypsurf$;
 \item\label{it:boundedder} $G'$ and $G''$ are bounded;
 \item\label{it:Ginv} for $c$ sufficiently small, see \cite[Lemma 1]{aap-corr}, $G$ is globally invertible;
 \item\label{it:GLip} $G$ and $G^{-1}$ are Lipschitz continuous;
 \item\label{it:Gito} It\=o's formula holds for $G$ and $G^{-1}$.
\end{enumerate}
\end{lemma}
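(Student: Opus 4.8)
The plan is to establish the six properties of $G$ by carefully analyzing the structure of $\varphi$ defined in \eqref{eq:varphi}, since $G$ is built from the smooth cut-off $\phi$, the projection $\proj$, the unit normal $n$, and the function $\alpha$. First I would observe that on $\R^d\setminus\hypsurf^{\varepsilon_0}$ the map $G$ is the identity, which is trivially $C^\infty$ with bounded derivatives and globally invertible there, so all the work concerns the region $\hypsurf^{\varepsilon_0}$ where $G(x)=x+\varphi(x)\alpha(\proj(x))$. The crucial structural fact I would extract is that $\phi$ is built precisely so that $\phi$ is $C^1$ with $\phi(1)=\phi'(1)=0$ (indeed $\phi$ and $\phi'$ vanish to high order at $|u|=1$), which makes $\varphi$ and hence $G$ match the identity in a $C^1$ fashion across the outer boundary $\{d(x,\hypsurf)=c\}$ of the support. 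This is what yields \eqref{it:C1}: I would compute the gradient of $\varphi$ using the chain rule, noting that $\proj$, $n\circ\proj$, and $d(\cdot,\hypsurf)=\|x-\proj(x)\|$ are each $C^3$ on $\hypsurf^{\varepsilon_0}$ because $\hypsurf\in C^4$ has reach $r_\hypsurf>\varepsilon_0$ (the unique closest point property from Definition \ref{def:positivereach} plus the regularity of $n$ in Assumption \ref{ass:existence}.\ref{ass:existence-pwlip}), and that $\alpha\in C^3_b$ by Assumption \ref{ass:existence}.\ref{ass:existence-alpha}.

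For \eqref{it:GpwLip} and \eqref{it:boundedder}, I would differentiate once more. Away from $\hypsurf$ the function $\varphi$ is $C^2$ (in fact $C^3$), being a product of $C^3$ functions of the $C^3$ quantities $\proj$, $n\circ\proj$, and $\|x-\proj(x)\|$; the only point where smoothness degrades is on $\hypsurf$ itself, where the factor $\|x-\proj(x)\|$ is merely Lipschitz. Thus I expect $G''$ to exist and be (piecewise) Lipschitz on $\R^d\setminus\hypsurf$ with exceptional set $\hypsurf$, which is exactly \eqref{it:GpwLip}. Boundedness of $G'$ and $G''$ in \eqref{it:boundedder} would follow from boundedness of $\alpha$ and its derivatives (Assumption \ref{ass:existence}.\ref{ass:existence-alpha}), boundedness of $n'$ and $n''$, $n'''$ (Assumption \ref{ass:existence}.\ref{ass:existence-pwlip}, via \cite[Lemma 3.10]{sz2016b}), the boundedness of the derivatives of $\proj$ on the positive-reach tube, and the explicit compactly supported form of $\phi$. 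A convenient check is that every $x$-dependence enters either through the bounded smooth data or through the compactly supported $\phi(\|x-\proj(x)\|/c)$, so no growth can arise.

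The main obstacle, and the step I would devote most care to, is the global invertibility \eqref{it:Ginv}. Here I would appeal to the smallness of the constant $c$, citing \cite[Lemma 1]{aap-corr}: for $c$ small, $\|\varphi(x)\alpha(\proj(x))\|$ and more importantly the Jacobian perturbation $\|G'(x)-\id\|$ can be made uniformly small, since $\varphi$ and its gradient are $O(c)$ on the support (the factor $\|x-\proj(x)\|\le c$ multiplies $\phi$, and differentiating produces terms controlled by $c$ times bounded quantities). A uniform bound $\|G'(x)-\id\|<1$ makes $G$ a local diffeomorphism everywhere with Jacobian bounded away from singularity; combined with $G=\id$ outside the compact tube and a Hadamard-type global inversion argument, this gives a global $C^1$-diffeomorphism. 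Once invertibility is secured, \eqref{it:GLip} is immediate: $G$ is Lipschitz since $G'$ is bounded by \eqref{it:boundedder}, and $G^{-1}$ is Lipschitz because $\|(G')^{-1}\|=\|(\id+(G'-\id))^{-1}\|\le (1-\|G'-\id\|)^{-1}$ is uniformly bounded by the same smallness estimate. Finally, \eqref{it:Gito} follows from \eqref{it:C1}--\eqref{it:boundedder}: since $G\in C^1$ with $G'$ Lipschitz and $G''$ existing and bounded off the null set $\hypsurf$, the standard It\=o formula applies to $G$ (and symmetrically to $G^{-1}$, using that $G^{-1}$ inherits the same regularity via the inverse function theorem away from $\hypsurf$), the set $\hypsurf$ being Lebesgue-null and of zero occupation time for the process by the non-parallelity condition in Assumption \ref{ass:existence}.\ref{ass:existence-nonparallelity}.
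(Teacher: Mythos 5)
Your overall route---direct verification from the explicit formula for $G$, with invertibility obtained from a small-$c$ perturbation argument---is in substance the argument of the references the paper invokes; note that the paper's own proof of this lemma is pure citation (\cite[Theorems 3.14, 3.19, 3.20]{sz2016b} and \cite[Lemmas 1 and 4]{aap-corr}), so a self-contained sketch is a legitimate alternative. However, there is one concrete flaw, and it sits exactly at the delicate point of the lemma: you assert for (i) that $d(\cdot,\hypsurf)=\|x-\proj(x)\|$ is $C^3$ on $\hypsurf^{\varepsilon_0}$. It is not: the distance function is not differentiable at points of $\hypsurf$ (only on the tube minus $\hypsurf$), and if your premise were true, $G$ would be $C^2$ across $\hypsurf$ and item (ii) would need no exceptional set---contradicting your own later remark that smoothness ``degrades'' on $\hypsurf$. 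The missing structural observation is the factorization $\varphi(x)=D(x)\,|D(x)|\,\phi\bigl(|D(x)|/c\bigr)$, where $D(x)=n(\proj(x))^\top(x-\proj(x))$ is the \emph{signed} distance, which genuinely is $C^3$ on the tube for a $C^4$ hypersurface of positive reach. Since $z\mapsto z|z|$ is $C^1$ with Lipschitz derivative and second derivative $2\,\sign(z)$ discontinuous precisely at $0$, this one observation delivers simultaneously: $G\in C^1$ \emph{through} $\hypsurf$ for (i), $G'$ Lipschitz with $G''$ existing and piecewise Lipschitz off $\hypsurf$ with exceptional set exactly $\hypsurf$ for (ii), and the boundedness in (iii). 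Without it, your chain-rule computation fails at every point of $\hypsurf$, which is where the C$^1$ regularity actually has to be checked (the matching at the outer boundary $|u|=1$ that you emphasize is the easy part, and is in fact $C^3$ since $\phi$ vanishes to fourth order there).

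A second, smaller underspecification concerns (vi): since $G\notin C^2(\R^d)$, the ``standard'' It\=o formula does not apply; what is needed is the generalized It\=o formula of \cite[Theorem 3.19]{sz2016b}, whose proof requires precisely the occupation-time control you gesture at---non-parallelity (Assumption \ref{ass:existence}.\ref{ass:existence-nonparallelity}) feeding into a Krylov-type estimate as in Theorem \ref{th:occtime}. Your remark that $\hypsurf$ is Lebesgue-null is by itself insufficient: without non-degeneracy of the diffusion in the normal direction the process could spend positive time in arbitrarily small neighbourhoods of $\hypsurf$ in a way that invalidates the mollification argument. The remaining items are fine at sketch level: the small-$c$ contraction bound $\|G'-\id\|<1$ together with $G=\id$ off a tube is exactly \cite[Lemma 1]{aap-corr}, and your derivation of (v) from it via the Neumann-series bound on $(G')^{-1}$ is correct. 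So the plan is repairable, but as written (i)--(ii) rest on a false regularity premise and (vi) needs the cited generalized It\=o formula rather than the classical one.
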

\begin{proof} For (i) and (iv) see \cite[Theorem 3.14]{sz2016b}. Assertion (v) follows from the proof
of \cite[Theorem 3.20]{sz2016b} and  for (vi) see  \cite[Theorem 3.19]{sz2016b}.
The boundedness of $G'$ follows from (v),  and the boundedness of $G''$ is proven in \cite[Lemma 4]{aap-corr}, showing (iii).
Moreover this, \cite[Lemma 3.6]{sz2016b}, \cite[Lemma 3.8]{sz2016b}, and \cite[Lemma 3.11]{sz2016b} assure that $G'$ is Lipschitz.
Hence, assertion (ii) follows again from the proof of  \cite[Theorem 3.20]{sz2016b} and the fact $G'''$ is bounded on $\mathbb{R}^d \setminus \hypsurf$, see \cite[Lemma 4]{aap-corr}.
\end{proof}

Now, define the coefficients
\begin{equation} \label{musig_G} 
\begin{aligned} 
\mu_G(z)&=G'(G^{-1}(z))\mu(G^{-1}(z))+\frac{1}{2}\tr \left[\sigma(G^{-1}(z))^\top G''(G^{-1}(z))\sigma(G^{-1}(z))\right],\\
\sigma_G(z)&=G'(G^{-1}(z)) \sigma(G^{-1}(z)) ,
\end{aligned}
\end{equation}
for $z \in \R^d$.
\begin{lemma}[\mbox{\cite[Theorem 3.20]{sz2016b}}]\label{lem:propG_lip}
The functions $\mu_G\colon \R^d \rightarrow \R^d$ and $\sigma_G\colon\R^d \rightarrow \R^{d,d}$ are globally Lipschitz.
\end{lemma}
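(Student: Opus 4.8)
The plan is to prove that both coefficients are globally Lipschitz by reducing the claim, for each of $\mu_G$ and $\sigma_G$, to two verifications: that the function is Lipschitz on every connected component of $\R^d\setminus\hypsurf$, and that it extends continuously across $\hypsurf$. Once these two facts are in hand, global Lipschitz continuity follows from a gluing argument that exploits the regularity of $\hypsurf$ (its positive reach, Assumption \ref{ass:existence}.\ref{ass:existence-pwlip}): near $\hypsurf$ the intrinsic metric on one side is comparable to the Euclidean one, so two nearby points on opposite sides can be joined by a short path passing through $\hypsurf$, along which continuity together with the one-sided Lipschitz bounds controls the increment, while for far-apart points the linear growth of the coefficients takes over. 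A recurring structural point is that $G=\id$ off $\hypsurf^{\varepsilon_0}$ by \eqref{eq:G}, so $G'-\id$ and $G''$ are supported in $\overline{\hypsurf^{\varepsilon_0}}$, exactly where $\mu$ and $\sigma$ are bounded by Assumption \ref{ass:existence}.\ref{ass:existence-musigmalocbounded}; this localization is what rescues the products below from the unboundedness of $\mu$ and $\sigma$.

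For $\sigma_G$ I would write $\sigma_G(z)=\sigma(G^{-1}(z))+\big(G'(G^{-1}(z))-\id\big)\sigma(G^{-1}(z))$. The first summand is a composition of the Lipschitz maps $\sigma$ and $G^{-1}$ (Lemma \ref{lem:propG}.\ref{it:GLip}), hence Lipschitz. In the second summand the factor $G'(G^{-1}(\cdot))-\id$ is bounded and Lipschitz (Lemma \ref{lem:propG}.\ref{it:GpwLip},\ref{it:boundedder},\ref{it:GLip}) and vanishes outside $G(\hypsurf^{\varepsilon_0})$, while on $G(\hypsurf^{\varepsilon_0})$ the factor $\sigma(G^{-1}(\cdot))$ is bounded; a product of a bounded Lipschitz map and a map that is bounded and Lipschitz on the support of the first is Lipschitz. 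Since $G'$ and $\sigma$ are continuous everywhere, no discontinuity arises and $\sigma_G$ is globally Lipschitz outright.

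For $\mu_G$ the two summands in \eqref{musig_G} are each only piecewise Lipschitz with exceptional set $G(\hypsurf)=\hypsurf$ (note $\varphi\equiv 0$ on $\hypsurf$, so $G$ fixes $\hypsurf$ pointwise): the first because the piecewise Lipschitz $\mu$ (Assumption \ref{ass:existence}.\ref{ass:existence-pwlip}) is composed with the Lipschitz $G^{-1}$ and multiplied by the bounded Lipschitz $G'\circ G^{-1}$, localized as above; the second because $G''$ is piecewise Lipschitz and bounded (Lemma \ref{lem:propG}.\ref{it:GpwLip},\ref{it:boundedder}) and $\sigma$ is Lipschitz and bounded near $\hypsurf$. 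Each summand is thus Lipschitz on every component of $\R^d\setminus\hypsurf$, and the heart of the matter is to show that the two jumps across $\hypsurf$ cancel, so that $\mu_G$ is in fact continuous. I would work in the signed-distance coordinate $s(x)=n(\proj(x))^\top(x-\proj(x))$, in which $x-\proj(x)=s\,n(\proj(x))$ and $\varphi(x)=s\,|s|\,\phi(|s|/c)$. Since $\phi(0)=1$ and $\varphi$ together with its first derivatives vanishes on $\hypsurf$, one obtains $G'(\xi)=\id$ for $\xi\in\hypsurf$, while the only discontinuous second derivative of $\varphi$ is the normal-normal one, whose one-sided limits differ by $4\,n n^\top$. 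Feeding this into \eqref{musig_G} shows that across $\hypsurf$ the trace term jumps by $\pm 2\|\sigma^\top n\|^2\alpha$, whereas the first term $G'\mu$ jumps by the jump of $\mu$, which by the very definition \eqref{eq:alphad} of $\alpha$ equals $\mp 2\|\sigma^\top n\|^2\alpha$. The two jumps are negatives of each other and cancel, so $\mu_G$ extends continuously across $\hypsurf$; combined with the one-sided Lipschitz bounds and the gluing argument, $\mu_G$ is globally Lipschitz.

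The main obstacle is precisely this jump computation: one must differentiate $\varphi$ twice in the normal direction while keeping track of the $C^3$-but-curved dependence of $n$ and $\proj$ on $x$, so as to confirm that only the normal-normal Hessian entry is discontinuous (the mixed and tangential entries remaining continuous), and then match the resulting constant and sign against the normalization $2\|\sigma^\top n\|^2$ hidden in the definition of $\alpha$. A secondary but genuine difficulty is the passage from one-sided Lipschitz bounds plus continuity to a global Euclidean Lipschitz estimate, where the intrinsic-metric subtlety illustrated in Example \ref{ex:pw-lip-intrinsic} must be handled using the positive reach of $\hypsurf$; this is where Assumption \ref{ass:existence}.\ref{ass:existence-pwlip} and the boundedness of the derivatives of $n$ enter.
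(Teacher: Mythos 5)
Your proposal is correct, but note that the paper does not prove this lemma at all: it imports it verbatim from \cite[Theorem 3.20]{sz2016b}, and what you have written is essentially a faithful reconstruction of that cited proof --- the decomposition into a piecewise Lipschitz part plus a continuity claim across $\hypsurf$, the jump cancellation between $G'\mu$ and the trace term that is built into the definition \eqref{eq:alphad} of $\alpha$ (your one-sided limits $\pm 2$ of the normal-normal second derivative of $\varphi$ and the resulting $\pm\|\sigma(\xi)^\top n(\xi)\|^2\alpha(\xi)$ bookkeeping are correct), and the final upgrade from ``intrinsic Lipschitz off $\hypsurf$ plus continuous'' to globally Lipschitz via the positive reach of $\hypsurf$, which you rightly flag as a genuine step (it fails for general closed exceptional sets) and which corresponds to the gluing lemmas of \cite{sz2016b}. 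Since your route coincides with the one in the cited source, there is nothing methodologically different to compare.
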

Thus, the SDE 
\begin{align}\label{eq:SDEtransf}
 dZ_t = \mu_G(Z_t) dt+\sigma_G(Z_t) dW_t, \quad t\ge0, \qquad Z_0=G(x),
\end{align}
has a unique strong solution $Z=(Z_{t})_{t\ge0}$. Moreover, $X=(X_t)_{t\ge 0}$ given by $X_t=G^{-1}(Z_t)$ solves SDE \eqref{eq:sde}, which follows from an application of It\=o's formula and \cite[Theorem 3.19]{sz2016b}.

\subsection{Occupation time estimates for It\=o processes}
\label{subsec:occhypsurf}
In this  subsection we study the occupation time of an It\=o process
close to a $C^4$-hypersurface.  The following result is a slight extension of  \cite[Theorem 2.7]{sz2017c} and is sometimes refereed to as Krylov's estimate. 
While classically Krylov estimates are derived for non-degenerate diffusions, see \cite{krylov1980}, 
the following result only assumes non-degeneracy of the diffusion coefficient at a normal direction within an environment of the hypersurface $\hypsurf$. 

\begin{theorem}\label{th:occtime} 
Let $\hypsurf$ be a $C^4$-hypersurface of positive reach  and let $r_{\hypsurf}>{\epsilon_0}>0$. Let further
$A=(A_t)_{t \geq 0}$, $B=(B_t)_{t \geq 0}$ be $\R^d$, respectively $\R^{d,d}$-valued progressively measurable processes such that
$$ \int_0^t \mathbb{E} \left[ \|A_s\| + \| B_s \|^2 \right] ds < \infty, \quad t\ge 0.$$
Moreover, let 
$X=(X_t)_{t\ge 0}$ be the $\R^d$-valued It\=o process given by
\[
X_t=X_0+\int_0^t \drift_s ds+\int_0^t \diff_s dW_s, \quad t \geq 0,
\] with $X_0 \in \R^d$.
Assume finally that
\begin{enumerate} 
\item[(i)] \label{it:bounded-coeff}
there exists a constant $c_{AB}>0$ such that 
for almost all $\omega\in \Omega$ we have
\[
\forall t\in[0,T]: \, X_t(\omega)\in \hypsurf^{\epsilon_0} \Longrightarrow
\max(\|\drift_t(\omega)\|,\|\diff_t(\omega)\|)\le c_{AB};
\] 
\item[(ii)]
there exists a constant $c_0>0$ such that 
for almost all $\omega\in \Omega$  we have
\begin{equation*} 
\forall t\in[0,T]: X_t(\omega)\in \hypsurf^{\epsilon_0} \Longrightarrow
n (p(X_t(\omega)) )^\top\diff_t(\omega)\diff_t(\omega)^\top n (p(X_t(\omega) ))\ge c_0^2.
\end{equation*} 
\end{enumerate}
Then there exists a constant $C>0$ such that for all 
$0<\epsilon<\epsilon_0/2$ and any measurable function $f:[0,\infty) \rightarrow [0,\infty)$ we have
\[
\E \left[ \int_0^{T}  f(d(X_s,\hypsurf)) \1_{\{X_s \in \hypsurf^\epsilon \}} ds \right] 
\le C \int_0^{\epsilon} f(x) dx .
\]
\end{theorem}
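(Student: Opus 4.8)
The plan is to reduce the $d$-dimensional occupation estimate to a one-dimensional one for the (extended) signed distance to $\hypsurf$, and then to invoke the classical occupation-time and Tanaka machinery for continuous semimartingales. First I would introduce the signed distance $\sdist(x)=n(\proj(x))^\top(x-\proj(x))$, which is well defined on $\hypsurf^{\epsilon_0}$ since $r_\hypsurf>\epsilon_0$ ensures the unique closest point property. Because $\hypsurf\in C^4$ has reach $>\epsilon_0$, the map $\sdist$ is $C^3$ on $\hypsurf^{\epsilon_0}$ with $\nabla\sdist=n\circ\proj$ (so $\|\nabla\sdist\|=1$) and with $\nabla^2\sdist$ bounded there by the principal curvatures of $\hypsurf$, which are controlled by $1/r_\hypsurf$. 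I would then extend $\sdist$ to a bounded $\widetilde\sdist\in C^2(\R^d)$ that coincides with $\sdist$ on $\hypsurf^{\epsilon_0/2}$ and is locally constant outside $\hypsurf^{\epsilon_0}$, e.g. $\widetilde\sdist=\chi\circ\sdist$ on the tube for a $C^2$ cutoff $\chi$ equal to the identity on $[-\epsilon_0/2,\epsilon_0/2]$ and constant near $\pm\epsilon_0$. The crucial features are that $\nabla\widetilde\sdist$ and $\nabla^2\widetilde\sdist$ are bounded and \emph{supported in} $\hypsurf^{\epsilon_0}$, and that $|\widetilde\sdist(x)|=d(x,\hypsurf)$ on $\hypsurf^{\epsilon_0/2}\supseteq\hypsurf^{\epsilon}$.

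Next I would set $\tilde Y_t:=\widetilde\sdist(X_t)$. By It\=o's formula this is a continuous semimartingale with $d\tilde Y_t=\tilde A_t\,dt+(\nabla\widetilde\sdist(X_t))^\top\diff_t\,dW_t$, where $\tilde A_t=\nabla\widetilde\sdist(X_t)^\top\drift_t+\tfrac12\tr[\diff_t^\top\nabla^2\widetilde\sdist(X_t)\diff_t]$, and, by assumption (ii),
\[
d\langle\tilde Y\rangle_t=n(\proj(X_t))^\top\diff_t\diff_t^\top n(\proj(X_t))\,dt\ge c_0^2\,dt\quad\text{on }\{X_t\in\hypsurf^{\epsilon_0/2}\}.
\]
On $\{X_s\in\hypsurf^{\epsilon}\}\subseteq\{X_s\in\hypsurf^{\epsilon_0/2}\}$ one has $|\tilde Y_s|=d(X_s,\hypsurf)<\epsilon$, so with $g(a):=f(|a|)\1_{\{|a|<\epsilon\}}$ we get $c_0^2\,f(d(X_s,\hypsurf))\1_{\{X_s\in\hypsurf^{\epsilon}\}}\,ds\le g(\tilde Y_s)\,d\langle\tilde Y\rangle_s$. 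Integrating over $[0,T]$, applying the occupation-time formula $\int_0^T g(\tilde Y_s)\,d\langle\tilde Y\rangle_s=\int_\R g(a)L_T^a\,da$ with $L^a$ the local time of $\tilde Y$, and using Tonelli, this yields
\[
c_0^2\,\E\Big[\int_0^T f(d(X_s,\hypsurf))\1_{\{X_s\in\hypsurf^{\epsilon}\}}\,ds\Big]\le\int_{-\epsilon}^{\epsilon}f(|a|)\,\E[L_T^a]\,da\le 2\Big(\sup_a\E[L_T^a]\Big)\int_0^{\epsilon}f(x)\,dx.
\]

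It then remains to bound $\sup_a\E[L_T^a]$ uniformly, which is where assumption (i) and the integrability hypothesis enter. By the Tanaka--Meyer formula, $L_T^a=|\tilde Y_T-a|-|\tilde Y_0-a|-\int_0^T\sign(\tilde Y_s-a)\,d\tilde Y_s$. The stochastic part $\int_0^T\sign(\tilde Y_s-a)(\nabla\widetilde\sdist(X_s))^\top\diff_s\,dW_s$ is a true martingale, since its integrand vanishes off $\hypsurf^{\epsilon_0}$ and is bounded by $c_{AB}\|\nabla\widetilde\sdist\|_\infty$ on $\hypsurf^{\epsilon_0}$, so it has zero mean. As $\widetilde\sdist$ is bounded, $\E[|\tilde Y_T-a|]$ is bounded uniformly for $|a|<\epsilon_0$, and because $\nabla\widetilde\sdist,\nabla^2\widetilde\sdist$ are supported in $\hypsurf^{\epsilon_0}$, assumption (i) gives $|\tilde A_s|\le\big(\|\nabla\widetilde\sdist\|_\infty c_{AB}+\tfrac{d}{2}\|\nabla^2\widetilde\sdist\|_\infty c_{AB}^2\big)\1_{\{X_s\in\hypsurf^{\epsilon_0}\}}$, so $\E[\int_0^T|\tilde A_s|\,ds]$ is bounded by a constant times $T$. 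Hence $\sup_a\E[L_T^a]\le C_1$ for a $C_1$ depending only on $\epsilon_0$, $c_{AB}$, $T$, $d$, and the curvature bounds of $\hypsurf$, and the claim follows with $C=2C_1/c_0^2$.

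The main obstacle is the geometric-analytic preparation of the first paragraph: producing a globally $C^2$, bounded extension $\widetilde\sdist$ whose gradient and Hessian are supported in the tube and uniformly bounded by the reach. This is precisely what makes the final local-time bound uniform in the process and in $\epsilon$, because it localizes the drift $\tilde A_s$ to $\hypsurf^{\epsilon_0}$, where (i) applies, instead of relying on the merely finite (but not uniformly controlled) global integrability of $\drift,\diff$. Once this extension is in place, the stochastic-calculus core---reducing to one dimension via $\langle\tilde Y\rangle'\ge c_0^2$ and then combining the occupation-time formula with Tanaka's formula---is routine.
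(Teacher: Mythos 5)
Your proposal is correct and takes essentially the same route as the paper, whose proof (deferring the construction to \cite[Theorem 2.7]{sz2017c}) likewise reduces to a scalar It\=o process given by a smoothly cut-off signed distance $\lambda(\sdist(X_t))$ with derivatives supported in the tube, lower-bounds its quadratic variation via assumption (ii), and then combines the occupation-time formula with a Tanaka--Meyer bound on $\sup_{a}\E[L_T^a]$ obtained from the boundedness in assumption (i). The only difference is cosmetic: your cutoff $\chi$ is the identity on $[-\epsilon_0/2,\epsilon_0/2]$, so $\tilde Y$ agrees exactly with the signed distance where it matters, which lets you skip the paper's extra steps involving the lower bound $\lambda'(z)\ge(3/4)^2$ and the substitution using $0\le\lambda'\le 1$.
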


\begin{proof}
The proof relies on \cite[Theorem 2.7]{sz2017c}, where a scalar process $Y$ is constructed  such that the occupation time of $Y$ in an
environment of $\{0\}$ is the same as the occupation time of $X$ in an environment of $\hypsurf$.
More precisely, there exists a bounded real-valued  It\=o process 
$$Y_t=Y_0 + \int_0^t \hat \drift_s ds+  \int_0^t \hat \diff_s dW_s, \quad t \geq 0,$$ 
where  $\hat \drift$, $\hat \diff$ are uniformly bounded, progressively measurable, $\mathbb{R}$ respectively $\mathbb{R}^{1,d}$-valued processes, such that
 \begin{align} 
 \label{Y-1} Y_t  \cdot \1_{ \{X_t \in \hypsurf^{\epsilon_1} \} }= \lambda  ( D(X_t)) \cdot \1_{ \{X_t \in \hypsurf^{\epsilon_1} \} }, \quad t \geq 0,
  \end{align}
where $D(x)=n(p(x))^{\top}(x -p(x))$ for $x\in\Theta^{\epsilon_0}$, $\epsilon_1=\epsilon_0/2$, and $\lambda: \R \to \R$ is given by 
\[
\lambda(z)= \begin{cases}
z- \frac{2}{3  \epsilon_1^2}z^3 +\frac{1}{5 \epsilon_1^4}z^5,
& |z|\le  \epsilon_1,\\
 \frac{8  \epsilon_1}{15}, & z>  \epsilon_1,\\
-\frac{8 \epsilon_1}{15}, & z< - \epsilon_1.
\end{cases}
\]
Since $|D(X_t)|=d(X_t, \hypsurf)$, the value of $Y$ corresponds to the $\lambda$-transformed signed distance of $X$ to $\hypsurf$.
Since 
$\lambda'(\pm\epsilon_1)=\lambda''(\pm \epsilon_1)=0$,
it holds that $\lambda\in C^2$. Moreover, $\lambda\colon[- \epsilon_1,  \epsilon_1] \rightarrow [\lambda(-\epsilon_1), \lambda( \epsilon_1)]$ is invertible.

By construction the quadratic variation of $Y$ satisfies $\P$-a.s.~that
$$ \int_0^t \1_{\left\{Y_s\in (-\lambda(\epsilon),\lambda(\epsilon))\right\}} d[Y]_s= \int_0^t \1_{\left\{Y_s\in (-\lambda(\epsilon),\lambda(\epsilon))\right\}}
\left|\lambda'\left(D(X_s)\right) \right|^2
n(p(X_s))^\top\diff_s\diff_s^\top n(p(X_s))ds, \,\, t \geq 0,$$
see the proof of \cite[Theorem 2.7]{sz2017c}.
Since 
\begin{align} 
 \label{Y-2} |\lambda^{-1}( Y_t)|  \cdot \1_{ \{X_t \in \hypsurf^{\epsilon_1} \} }=  \big | D(X_t) \big|\cdot \1_{ \{X_t \in \hypsurf^{\epsilon_1} \} } = d(X_t, \hypsurf) \cdot \1_{ \{X_t \in \hypsurf^{\epsilon_1} \} }, \quad t \geq 0, \end{align}
and
$\lambda'(z)\ge \left(\frac{3}{4}\right)^2$ for all $|z|\le \epsilon\le \epsilon_0/2 $, Assumption (ii) assures that
$$ \E \left[ \int_0^T  f(d(X_s,\theta)) \1_{\{X_s\in \hypsurf^\epsilon\}} ds \right]
\le \smallconsti^{2}
\E\left[\int_0^T f(|\lambda^{-1}(Y_s)|)\1_{\left\{Y_s\in (-\lambda(\epsilon),\lambda(\epsilon))\right\}} d\left[Y\right]_s\right].$$

Therefore, the occupation  time formula 
\cite[Chapter 3, 7.1 Theorem]{karatzas1991} for one-dimensional continuous 
semimartingales yields
\begin{align*}
\E \left[ \int_0^T f(d(X_s,\theta)) \1_{\{X_s\in \hypsurf^\epsilon\}} ds \right]
&\le \smallconsti^{2}
\E\left[\int_0^T f(|\lambda^{-1}(Y_s)|)\1_{\left\{Y_s\in (-\lambda(\epsilon),\lambda(\epsilon))\right\}} d\left[Y\right]_s\right]\\
&= \smallconsti^{2} \E\left[\int_\R f(|\lambda^{-1}(y)|) \1_{(-\lambda(\epsilon),\lambda(\epsilon))}(y)L_T^{y}\left(Y\right)dy\right].
\end{align*}
By the Tanaka-Meyer formula, see e.g 
\cite[Chapter 3, 7.1 Theorem]{karatzas1991}, we have
$$ 2 \E  \left[  L_T^{y} \right]=\E  \left[  |Y_T-y| -    |Y_0-y| \right]- \int_0^T \E  \left[  \sign(Y_s-y)\hat{B}_s \right] ds $$
and therefore 
$$  \sup_{y\in \R}\E  \left[   L_T^{y}  \right] \leq \E \left[  |Y_T-Y_0|  \right]+  \int_0^T \E  \left[  | \hat{B}_s| \right]ds \leq   \E  \left[  |Y_T-Y_0| \right] + T \|\hat{B}\|_{\infty} < \infty,$$
since $Y$ is a real-valued It\=o process with bounded coefficients.
Thus, it follows that
\begin{align*}
\E \left[ \int_0^T  f(d(X_s,\theta)) \1_{\{X_s\in \hypsurf^\epsilon\}} ds \right]
&\le  \frac{2^5}{3^2c_0^2}\, \sup_{y\in \R}\E\left[L_T^{y}\left(Y\right)\right] \int_0^{\lambda(\epsilon)} f( \lambda^{-1}(x)) dx.
\end{align*}
Since $0 \leq \lambda'(x) \leq 1$ for all $x \in \mathbb{R}$, substitution yields
\begin{align*}
\E \left[ \int_0^T f(d(X_s,\theta)) \1_{\{X_s\in \hypsurf^\epsilon\}} ds \right]
&\le  \frac{2^5}{3^2c_0^2}\, \sup_{y\in \R}\E\left[L_T^{y}\left(Y\right)\right]\int_0^{\epsilon} f(x) dx,
\end{align*}
which shows the assertion.
\end{proof}

\section{Properties of the Adaptive Euler-Maruyama Scheme}
\label{prop:Euler}

\subsection{General properties} 
Recall that our  Euler-Maruyama scheme is given by \begin{align} \label{euler_anf_2}
  \tau_0 = 0, \qquad \appxy_0 &=x \in \mathbb{R}^d,
 \end{align}
 and
 \begin{align} \label{euler_it_2}
 \tau_{k+1}&= \tau_{k} + h(\appxy_{\tau_k},\delta), \quad \appxy_{t} = \appxy_{\tau_k}+ \mu(\appxy_{\tau_k})(t-\tau_k) + \sigma(\appxy_{\tau_k})(W_{t}-W_{\tau_k}), \,\, t\in(\tau_k,\tau_{k+1}],
\end{align}
with $ k\in \N_0$, and $h\colon\R^d\times (0,1)\to(0,1)$,
\begin{align}\label{eq:step size_2}
  h(x,\delta)=\begin{cases}
    \delta^2,\, & x \in \hypsurf^{\varepsilon_2},\\
   \frac{1}{\|\sigma\|_{\infty,\hypsurf^{\varepsilon_0}}^2}\left( \frac{d(x,\hypsurf)}{ \log (1/\delta)}\right)^2, \, & x \in \hypsurf^{\varepsilon_1} \backslash  \hypsurf^{\varepsilon_2},\\
   \delta,\, & x \notin  \hypsurf^{\varepsilon_1},
  \end{cases}
 \end{align}
where \begin{align} \label{eq:step_bd_2} \varepsilon_1= \|\sigma\|_{\infty,\hypsurf^{\varepsilon_0}}\log(1/\delta)\sqrt{\delta}, \qquad \varepsilon_2=\|\sigma\|_{\infty,\hypsurf^{\varepsilon_0}}  \log(1/\delta)\delta. \end{align}
Moreover, set $\et=\max\{\tau_k\colon \tau_k \le t\}$.\\

Figure \ref{fig:step sizeregimes} illustrates the different step size regimes. The solid line is the set of discontinuities $\hypsurf$ of the drift,
the area between the two dashed lines is $\hypsurf^{\varepsilon_2}$, and the area between the dotted lines is $\hypsurf^{\varepsilon_1}$.
\begin{figure}
\input{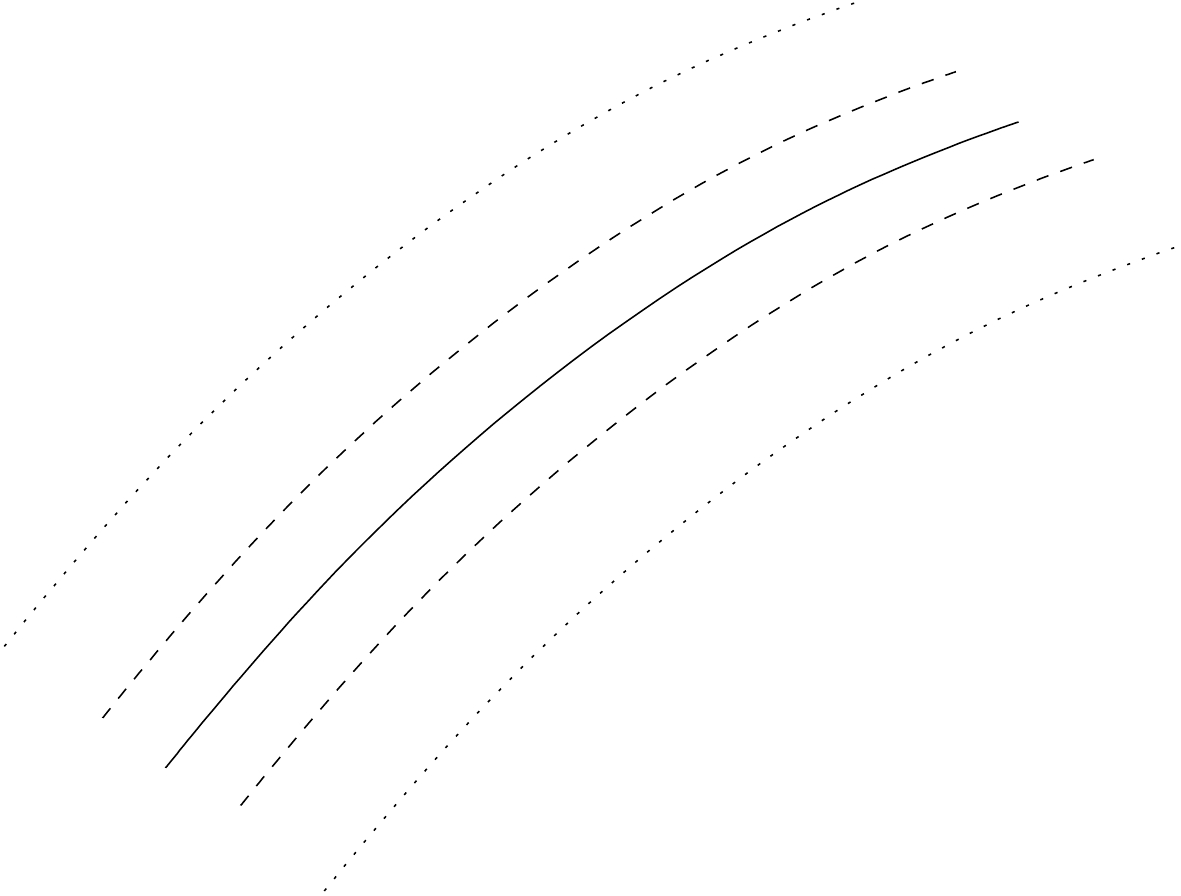_t}
\caption{The three step size regimes.\label{fig:step sizeregimes}}
\end{figure}

\begin{framework}\label{framework}
We assume that
\begin{enumerate}[(i)]
\setlength{\itemsep}{0em}
\item Assumption \ref{ass:existence} holds,
\item the step size $\delta$ is sufficiently small such that
$$ \varepsilon_1= \|\sigma\|_{\infty,\hypsurf^{\varepsilon_0}} \log(1/\delta)\sqrt{\delta} < \varepsilon_0/4,$$
\item the constant $c$ from equation \eqref{eq:varphi} is sufficiently small, see \cite[Lemma 3.18]{sz2016b}, in particular, $c<\varepsilon_0$, such that $G$ is globally invertible.
\end{enumerate}
\end{framework}

For fixed $\delta\in (0,1)$ define the mapping
$$ \Phi:\mathbb{R}^d \times C([0,\infty); \mathbb{R}^d) \rightarrow C([0,\infty); \mathbb{R}^d), \qquad \Phi(x,W)=(X^{h}_s)_{s \geq 0}. $$
Note that by construction $\Phi$  is  $\mathcal{B}(\mathbb{R}^d \times C([0,\infty); \mathbb{R}^d))- \mathcal{B}( C([0,\infty); \mathbb{R}^d))$ measurable.
Also by construction our discretization points $\tau_k$  are stopping times, i.e.~they satisfy
$$ \{ \tau_k \leq  u\} \in \F_u, \quad
u \geq 0, \quad k\in\N_0.$$
This can be shown by induction since $\tau_{k+1}$ is $\appx_{\tau_k}$ measurable and $X_0^h$ is deterministic. The strong Markov property of Brownian motion then implies that  for all $k\in\N_0$ the process $$(W^{\tau_k}_{t})_{t \geq 0}=(W_{t+ \tau_{k}}-W_{\tau_k})_{t \geq 0}$$ is again a Brownian motion and independent of $X_{\tau_k}$.

\begin{lemma}\label{markov} Assume Framework \ref{framework},
let $F \in  \mathcal{B}(C([0,\infty); \R^d))$, and $k \in \mathbb{N}_0$. Then we have
 $$    \P( (\appx_{t+\tau_k})_{t \geq 0} \in F | \appx_{\tau_k}=y) =\P(\Phi(y,W) \in F) \quad  \text{for} \quad  \P^{X_{\tau_k}}\text{-almost all } y \in \mathbb{R}^d.   $$
 \end{lemma}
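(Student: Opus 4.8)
The plan is to reduce the assertion to the elementary factorisation lemma for independent random variables, once a pathwise \emph{restart identity} for the scheme has been established. Concretely, I would first show that the time-shifted scheme is exactly $\Phi$ applied to the current state and the shifted Brownian motion, namely
\begin{align}
(\appx_{t+\tau_k})_{t \geq 0} = \Phi\big(\appx_{\tau_k}, W^{\tau_k}\big) \qquad \P\text{-a.s.},
\end{align}
where $W^{\tau_k}=(W_{t+\tau_k}-W_{\tau_k})_{t\ge 0}$. Granting this, the left-hand side of the lemma is $\E[\,\1_F(\Phi(\appx_{\tau_k},W^{\tau_k}))\mid \appx_{\tau_k}=y\,]$, with the integrand a measurable function $g(\appx_{\tau_k},W^{\tau_k})$, $g(z,w)=\1_F(\Phi(z,w))$, measurable because $\Phi$ is measurable; the conclusion then follows from the independence of $W^{\tau_k}$ and $\F_{\tau_k}$ together with the $\F_{\tau_k}$-measurability of $\appx_{\tau_k}$.

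To prove the restart identity I would argue pathwise, by induction on the step index $j$, that the grid generated by $\Phi(\appx_{\tau_k},W^{\tau_k})$ is precisely $\{\tau_{k+j}-\tau_k : j \in \N_0\}$ and that its value at $\tau_{k+j}-\tau_k$ equals $\appx_{\tau_{k+j}}$. The base case is immediate, since both schemes start at $\appx_{\tau_k}$ at time $0$. For the inductive step, the step-size function $h$ depends only on the current position, so the grid increments coincide, $\tau_{k+j+1}-\tau_{k+j}=h(\appx_{\tau_{k+j}},\delta)$; and on the corresponding interval the shifted increments satisfy $W^{\tau_k}_t - W^{\tau_k}_{\tau_{k+j}-\tau_k}=W_{t+\tau_k}-W_{\tau_{k+j}}$, so that the two Euler updates of the form \eqref{eq:euler-timecont} agree verbatim. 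Since $\tau_k$ is itself the $k$-th grid point, there is no boundary mismatch and the identity holds for every $\omega$.

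With the restart identity in hand, I would invoke the facts recalled in the paragraph preceding the lemma: $\tau_k$ is a stopping time, so by the strong Markov property of Brownian motion $W^{\tau_k}$ is again a $d$-dimensional Brownian motion and is independent of $\F_{\tau_k}$, in particular of $\appx_{\tau_k}$, while $\appx_{\tau_k}$ is $\F_{\tau_k}$-measurable. The standard factorisation lemma applied to the bounded measurable map $g$ and the pair $(\appx_{\tau_k},W^{\tau_k})$ then yields, for almost all $y$ with respect to the law of $\appx_{\tau_k}$,
\begin{align}
\E\big[\1_F(\Phi(\appx_{\tau_k},W^{\tau_k}))\mid \appx_{\tau_k}=y\big]=\E\big[\1_F(\Phi(y,W^{\tau_k}))\big].
\end{align}
Finally, since $W^{\tau_k}$ has the same distribution as $W$, the right-hand side equals $\P(\Phi(y,W)\in F)$, which is the claim.

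The main obstacle I anticipate is making the restart identity fully rigorous: one must track the \emph{random} number of grid points and verify that the recursively defined grid of $\Phi(\appx_{\tau_k},W^{\tau_k})$ lines up exactly with the tail $(\tau_{k+j})_{j\ge 0}$ of the original grid. This is where the self-referential nature of the step-size function (the next grid point depends on the current iterate, which is itself being constructed) must be handled by the simultaneous induction on grid points and iterate values described above; once this bookkeeping is done, the remaining measurability and independence arguments are routine.
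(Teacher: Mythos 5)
Your proposal is correct and follows essentially the same route as the paper's proof: establish the pathwise restart identity $(\appx_{t+\tau_k})_{t\ge 0}=\Phi(\appx_{\tau_k},W^{\tau_k})$ iteratively step by step (using that $h$ depends only on the current position), then conclude via the independence of $W^{\tau_k}$ from $\appx_{\tau_k}$ (strong Markov property, $\tau_k$ being a stopping time) and the factorization lemma for conditional expectations. Your write-up merely spells out in more detail the inductive grid-matching bookkeeping and the final identification $W^{\tau_k}\overset{d}{=}W$, both of which the paper leaves implicit.
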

 \begin{proof}
 Using $\Phi$ and $W^{\tau_k}$ we can write
 \begin{align*}  \appx_{t+\tau_k} &= \appx_{\tau_k} +  \mu(\appx_{\tau_k})t + \sigma(\appx_{\tau_k}) W_{t}^{\tau_k}
  =\Phi(\appx_{\tau_k},W^{\tau_k})(t), \quad t \in [0,\tau_{k+1}-\tau_k].
  \end{align*} Proceeding iteratively
 we obtain
 $$(\appx_{t+\tau_k})_{t \geq 0}=(\Phi(\appx_{\tau_k},W^{\tau_k}))_{t \geq 0}. $$
 Since $W^{\tau_k}$ is a Brownian motion which is independent of $\appx_{\tau_k}$, the factorization Lemma for conditional expectations concludes the proof. 
 \end{proof}

\begin{lemma} \label{msq-appx} Assume Framework \ref{framework} and let $T>0$. For any $p\geq 2$ 
there exist constants $C_p,K_p>0$ such that
$$ \E \Big[ \sup_{t \in [0,T]} \|\appx_t  \|^p   \Big] \leq  C_p    $$  and
$$ \E [ \|\appx_t - \appx_s \|^p ]  \leq K_p \cdot |t-s|^{p/2}, \quad 0 \leq s  <t \leq T. $$
\end{lemma}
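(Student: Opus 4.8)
The statement asserts two standard estimates for the continuous-time Euler-Maruyama scheme: a uniform $p$-th moment bound and a Hölder-type bound on increments. The key structural point is that, although the step sizes $\tau_{k+1}-\tau_k=h(\appx_{\tau_k},\delta)$ are random and state-dependent, the scheme between grid points is still an Itô process driven by $W$ with piecewise-constant (in time) coefficients $\mu(\appx_{\et})$ and $\sigma(\appx_{\et})$, where $\et=\max\{\tau_k\colon \tau_k\le t\}$. So the whole machinery of Burkholder-Davis-Gundy (BDG) and Gronwall applies, provided one is careful that $\et$ is a stopping time and that the coefficients obey a linear growth bound.

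\medskip
\textbf{Step 1 (rewrite as an Itô process).} First I would observe that from \eqref{euler_it_2} the continuous-time scheme satisfies, for $t\in[0,T]$,
\begin{align*}
\appx_t=x+\int_0^t \mu(\appx_{\underline s})\,ds+\int_0^t \sigma(\appx_{\underline s})\,dW_s,
\end{align*}
since on each subinterval $(\tau_k,\tau_{k+1}]$ the integrands are constant and equal to $\mu(\appx_{\tau_k})$, $\sigma(\appx_{\tau_k})$. The integrands are progressively measurable because each $\tau_k$ is a stopping time (noted in the text preceding Lemma \ref{markov}).

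\medskip
\textbf{Step 2 (linear growth and the uniform moment bound).} The text remarks right after Assumption \ref{ass:existence} that $\mu$ and $\sigma$ satisfy a linear growth condition, so there is a constant $L$ with $\|\mu(y)\|+\|\sigma(y)\|\le L(1+\|y\|)$ for all $y$. Applying $\|\cdot\|^p$, the elementary inequality $\|a+b+c\|^p\le 3^{p-1}(\|a\|^p+\|b\|^p+\|c\|^p)$, Jensen/Hölder on the drift integral, and BDG on the stochastic integral, I would bound
\begin{align*}
\E\Big[\sup_{s\in[0,t]}\|\appx_s\|^p\Big]\le C\Big(1+\|x\|^p+\int_0^t \E\big[\sup_{r\in[0,s]}\|\appx_r\|^p\big]\,ds\Big),
\end{align*}
using linear growth to convert $\E[\,\|\mu(\appx_{\underline s})\|^p+\|\sigma(\appx_{\underline s})\|^p\,]$ into $C(1+\E[\sup_{r\le s}\|\appx_r\|^p])$ (note $\|\appx_{\underline s}\|\le \sup_{r\le s}\|\appx_r\|$). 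A standard Gronwall argument then gives the first claim with $C_p=C_p(L,p,T,x)$. A minor technical point I would address is finiteness of $\E[\sup_{s\le T}\|\appx_s\|^p]$ before applying Gronwall; this is handled by a localization/stopping-time argument (stop at $\inf\{t:\|\appx_t\|\ge R\}$, derive the estimate with a constant independent of $R$, then let $R\to\infty$ via monotone convergence).

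\medskip
\textbf{Step 3 (increment bound).} For $0\le s<t\le T$, I would write $\appx_t-\appx_s=\int_s^t\mu(\appx_{\underline r})\,dr+\int_s^t\sigma(\appx_{\underline r})\,dW_r$, then apply the same $\|\cdot\|^p$ split, Hölder on the drift term (producing a factor $(t-s)^{p-1}\int_s^t\cdots\le (t-s)^p\cdot\text{const}$, hence $\le K(t-s)^{p/2}$ since $t-s\le T$), and BDG on the diffusion term (producing $(t-s)^{p/2-1}\int_s^t\cdots\le (t-s)^{p/2}\cdot\text{const}$). The coefficient moments $\E[\|\mu(\appx_{\underline r})\|^p]$ and $\E[\|\sigma(\appx_{\underline r})\|^p]$ are uniformly bounded over $[0,T]$ by Step 2 together with linear growth, which furnishes the constant $K_p$.

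\medskip
\textbf{Main obstacle.} I expect the only genuine subtlety to be the \emph{random, state-dependent stepping}: one must justify that $t\mapsto\appx_t$ really is a well-defined Itô process with progressively measurable integrands despite the $\tau_k$ being stopping times, and that the minimal step size $\delta^2>0$ guarantees $\tau_k\to\infty$ so that $\et$ is well defined and the representation in Step 1 holds up to time $T$ (i.e.\ only finitely many steps occur before $T$). Everything else is the routine BDG–Gronwall template; the linear growth of $\mu,\sigma$ — already recorded in the paper — is exactly what makes that template go through, and the discontinuity of $\mu$ plays no role here because the estimates only use the growth bound, not continuity.
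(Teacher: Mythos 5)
Your proposal is correct, and its overall architecture (It\=o-process representation with piecewise-constant integrands, linear growth, Cauchy--Schwarz/H\"older on the drift, Burkholder--Davis--Gundy on the stochastic integral, then Gronwall; increment bound by the same split over $[s,t]$) coincides with the paper's proof. The one place where you genuinely diverge is the a priori integrability needed before Gronwall can be applied. The paper does \emph{not} localize: it first proves $\E[\|\appx_{\tau_k}\|^p]<\infty$ for every $k$ by a discrete induction along the random grid, using the strong Markov structure behind Lemma \ref{markov} (conditioning on $\appx_{\tau_k}$ and integrating against $\P^{\appx_{\tau_k}}$) to get $\E[\|W_{\tau_{k+1}}-W_{\tau_k}\|^p]\le \kappa_p\,\delta^{p/2}$ from the maximal step size $\delta$, and then the recursion $\E[\|\appx_{\tau_{k+1}}\|^p]\le c_1(1+\E[\|\appx_{\tau_k}\|^p])$; combined with the Gronwall-type integral inequality this yields $\E[\sup_{t\in[0,T]}\|\appx_t\|^p]<\infty$ (implicitly using that the minimal step $\delta^2$ caps the number of steps before $T$, exactly the point you flag as the ``main obstacle''). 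You instead localize at $\theta_R=\inf\{t:\|\appx_t\|\ge R\}$ and pass to the limit by monotone convergence; this works because the linear growth bound makes the Gronwall constant uniform in $R$ and pathwise continuity gives $\theta_R\to\infty$, and it spares you the conditional-increment estimate entirely. The paper's route buys, as a byproduct, explicit moment control of the Brownian increments over the random, state-dependent steps --- a fact of independent use given the adaptive stepping --- while your route is the shorter, more standard textbook template; either is a complete proof.
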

\begin{proof}

As preparation for the first statement note that
 $W_{\tau_{k+1}}-W_{\tau_k}$ is independent of $ \appx_{\tau_{k}}$ and
satisfies
$$  \E \left[\| W_{\tau_{k+1}}-W_{\tau_k}\|^p\right] =   \int_{\mathbb{R}}   \E \left[\| W^{\tau_k}_{h(y,\delta)}\|^p\right] \, d \P^{\appx_{\tau_k}}(y)  \leq
 \kappa_{p} \cdot \delta^{p/2}$$
for some constant $ \kappa_p >0$. Clearly, we have
$$  \| \appx_{\tau_{k+1}} \|^p \leq 3^{p-1} \|  \appx_{\tau_{k}}\|^p +   3^{p-1}  \| \mu( \appx_{\tau_{k}})\|^p  \delta^p
  +   3^{p-1}   \| \sigma(\appx_{\tau_k}) \|^p \| W_{\tau_{k+1}}-W_{\tau_k}  \|^p.$$
Hence, the preparations together with the linear growth of $\mu$ and $\sigma$ imply the existence of a constant $c_1 >0$ independent of $k$ such 
that $$  \E \left[\|\appx_{\tau_{k+1}} \|^p\right] \leq c_1 \left(1+ \E \left[\|  \appx_{\tau_{k}}\|^p\right]\right), \quad k\in\N_0. $$
Since $X_0=x$ it follows that
\begin{align}   \label{finite_m_1} \E \left[\| \appx_{\tau_{k}} \|^p \right]< \infty, \qquad k\in\N_0.   \end{align}

We also have
$$  \sup_{t \in [0,s]} \|\appx_t\|^p \leq 3^{p-1} \|x\|^p +   3^{p-1}  \sup_{t \in [0,s]} \left \| \int_0^t  \mu(\appx_{\eu}) du  \right\|^p +   3^{p-1}  \sup_{t \in [0,s]} \left\| \int_0^t  \sigma(\appx_{\eu}) dW_u  \right\|^p, \quad s \in [0,T].$$
The Cauchy-Schwartz inequality for the Riemann-integral, the Burkholder-Davis-Gundy inequality for the It\=o-integral, and the linear  growth condition on $\mu$ and $\sigma$ ensure the existence of a constant $c_2>0$, which depends on $T>0$, $\mu$, $\sigma$, $p$, and $x$, such that
\begin{align} \label{eq_moment_2} \E \Big[ \sup_{t \in [0,s]} \|\appx_t\|^p \Big] \leq c_2 \left(1 +  \int_0^s  \E \left[\| \appx_{\eu} \|^p \right] du \right), \quad s \in [0,T]. \end{align}
By  \eqref{finite_m_1} we now obtain that
\begin{align}   \label{finite_m_2}  \E \Big[ \sup_{t \in [0,T]} \|\appx_t\|^p \l \Big] < \infty .   \end{align}

Equation \eqref{eq_moment_2} also yields
$$  \E  \Big[ \sup_{t \in [0,s]} \|\appx_t  \|^p  \Big]  \leq c_2  + c_2 \int_0^s   \E  \Big[ \sup_{t \in [0,u]} \|\appx_t  \|^p \Big] du.$$
Since  \eqref{finite_m_2} holds,   Gronwall's Lemma now yields the first assertion.\\

For the second statement note that
$$ \| \appx_t - \appx_s\|^{p} \leq  2^{p-1 } \left \| \int_s^t \mu(\appx_{\underline{u}}) du \right \|^p+  2^{p-1 } \left \| \int_s^t \sigma(\appx_{\underline{u}}) dW_u \right \|^p, \quad s,t \in [0,T].$$
The Cauchy-Schwartz inequality,  the Burkholder-Davis-Gundy inequality, and the linear growth condition of the coefficients together with the first assertion yield the statement.
\end{proof}

Applying Theorem \ref{th:occtime}  to our Euler-Maruyama scheme we obtain:
\begin{lemma}\label{it:inner_cont} Assume Framework
\ref{framework} and let $T>0$. Moreover, let   $\varepsilon < \varepsilon_0/2$ and $f\colon[0, \infty) \rightarrow [0,\infty)$ be a measurable function.
Then, there exists a constant $C>0$ such that
$$  \E \left[ \int_0^T f(d(\appx_t,\hypsurf)) \1_{ \{\appx_t \in \hypsurf^{\varepsilon} \} } dt \right] \leq C \, \int_{0}^{\varepsilon} f(x) dx.$$
In particular, we have
\begin{align*}  
\int_{0}^{T} \P( \appx_{t} \in \Theta^{\varepsilon}  ) dt  \le  C \cdot \varepsilon.
\end{align*}
\end{lemma}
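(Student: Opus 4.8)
The plan is to recognize the continuous-time scheme \eqref{eq:euler-timecont} as an It\=o process with piecewise-constant coefficients and then invoke Theorem \ref{th:occtime}. Writing $\appx_t = x + \int_0^t \mu(\appx_{\eu})\,du + \int_0^t \sigma(\appx_{\eu})\,dW_u$, I would set $\drift_s=\mu(\appx_{\es})$ and $\diff_s=\sigma(\appx_{\es})$. These are progressively measurable, since the $\tau_k$ are stopping times and $\es$ is the last grid point before $s$, and the required integrability $\int_0^t \E[\|\drift_s\|+\|\diff_s\|^2]\,ds<\infty$ follows from the linear growth of $\mu,\sigma$ (Assumption \ref{ass:existence}) together with the moment bound $\E[\sup_{u\le T}\|\appx_u\|^p]\le C_p$ of Lemma \ref{msq-appx}.

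It then remains to check hypotheses (i) and (ii) of Theorem \ref{th:occtime} for these frozen coefficients, the content of both being to control $\mu(\appx_{\es}),\sigma(\appx_{\es})$ whenever the interpolated position $\appx_s$ lies in $\hypsurf^{\varepsilon_0}$. For (i), I would use that the adaptive step size \eqref{eq:step size_2} is small near $\hypsurf$ (of order $\delta^2$ on $\hypsurf^{\varepsilon_2}$ and at most $\delta$ on $\hypsurf^{\varepsilon_1}$), so that the last grid point $\appx_{\es}$ stays in a controlled neighbourhood of $\appx_s$; combined with the local boundedness $\|\mu\|_{\infty,\hypsurf^{\varepsilon_0}}+\|\sigma\|_{\infty,\hypsurf^{\varepsilon_0}}<\infty$ from Assumption \ref{ass:existence}.\ref{ass:existence-musigmalocbounded} this yields a bound $c_{AB}$. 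For (ii), the non-parallelity Assumption \ref{ass:existence}.\ref{ass:existence-nonparallelity} gives $\|\sigma(\xi)^\top n(\xi)\|\ge c_0$ on $\hypsurf$; using the Lipschitz continuity of $\sigma$ and the smoothness of $n\circ\proj$ to pass from $\xi=\proj(\appx_s)\in\hypsurf$ to the nearby frozen point $\appx_{\es}$, I would obtain $\|\sigma(\appx_{\es})^\top n(\proj(\appx_s))\|\ge c_0/2$ for $\delta$ small, i.e.\ $n(\proj(\appx_s))^\top \diff_s\diff_s^\top n(\proj(\appx_s))\ge c_0^2/4$, which is condition (ii) with a relabelled constant.

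The main obstacle is precisely the freezing: unlike for the exact solution (where $\drift_s=\mu(X_s)$ is automatically bounded on $\hypsurf^{\varepsilon_0}$), the inclusion $\appx_s\in\hypsurf^{\varepsilon_0}$ does not force $\appx_{\es}\in\hypsurf^{\varepsilon_0}$, because within one step the Brownian increment $W_s-W_{\es}$ is unbounded, so the interpolated path may enter $\hypsurf^{\varepsilon_0}$ starting from a grid point far from $\hypsurf$, where $\mu,\sigma$ (only linearly bounded) are not a priori controlled. I would handle this excursion case by exploiting that from a point outside $\hypsurf^{\varepsilon_1}$ the step is exactly $\delta$ and by controlling its probability and occupation contribution through the moment bounds of Lemma \ref{msq-appx} (equivalently, a localization in $\|\appx\|$), so that the constants $c_{AB},c_0$ — and hence the final $C$ — may be chosen uniformly in $\delta,\varepsilon$ and $f$. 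Securing this uniform control is the delicate point.

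With the hypotheses verified, Theorem \ref{th:occtime} gives the stated occupation-time estimate. For the ``in particular'' bound I would take $f\equiv 1$, so that $f(d(\appx_t,\hypsurf))=1$ and, by Tonelli's theorem, $\int_0^{T} \P(\appx_t\in\hypsurf^{\varepsilon})\,dt=\E\big[\int_0^T \1_{ \{\appx_t\in\hypsurf^{\varepsilon} \} }\,dt\big]\le C\int_0^{\varepsilon}1\,dx=C\,\varepsilon$.
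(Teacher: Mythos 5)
Your proposal follows essentially the same route as the paper: the paper obtains Lemma \ref{it:inner_cont} precisely by applying Theorem \ref{th:occtime} to the It\=o process $\appx$ with the frozen coefficients $\drift_s=\mu(\appx_{\es})$, $\diff_s=\sigma(\appx_{\es})$, and offers no verification beyond this one-line invocation. The ``freezing'' subtlety you flag --- that $\appx_s\in\hypsurf^{\varepsilon_0}$ does not force the anchor point $\appx_{\es}$ into the region where Assumption \ref{ass:existence}.\ref{ass:existence-musigmalocbounded} and Assumption \ref{ass:existence}.\ref{ass:existence-nonparallelity} supply the bound $c_{AB}$ and the nondegeneracy constant $c_0$ --- is genuine, but it is passed over silently in the paper as well, so your hypothesis-checking sketch is in fact more detailed than the paper's own treatment.
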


\subsection{Exit time estimates for the {adaptive} Euler-Maruyama scheme}

In this subsection we present exit time estimates for our Euler-Maruyama scheme. For this, we require the following Lemma.
\begin{lemma}\label{max_bm}
 There exists a constant $C_{tail}>0$ such that
 \begin{align*}
 \P \left( \sup_{0\le s\le t}\|W_s\| \ge \varepsilon \right) \le C_{tail} \cdot \exp\left(- \frac{\varepsilon}{\sqrt{t}} \right), \quad \varepsilon>0, \quad t\ge0.
\end{align*}
\end{lemma}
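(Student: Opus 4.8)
The plan is to reduce the $d$-dimensional maximal tail to a scalar one, bound the latter by the reflection principle together with a Gaussian tail estimate, and then relax the resulting sub-Gaussian bound to the coarser exponential form that is actually claimed. Throughout I would treat $t>0$; for $t=0$ the left-hand side is $\P(\|W_0\|\ge\varepsilon)=0$ and the inequality is trivial.

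First I would split $W=(W^{(1)},\dots,W^{(d)})$ into its independent scalar components. If $\|W_s\|\ge\varepsilon$ for some $s\in[0,t]$, then $(W^{(i)}_s)^2\ge\varepsilon^2/d$ for at least one index $i$, so a union bound gives
$$\P\Big(\sup_{0\le s\le t}\|W_s\|\ge\varepsilon\Big)\le\sum_{i=1}^d\P\Big(\sup_{0\le s\le t}|W^{(i)}_s|\ge\varepsilon/\sqrt d\Big).$$
This isolates a purely one-dimensional problem for the running maximum of the modulus of a standard Brownian motion.

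Next, for a scalar Brownian motion $B$ and $a>0$ I would use the inclusion $\{\sup_{0\le s\le t}|B_s|\ge a\}\subseteq\{\sup_{0\le s\le t}B_s\ge a\}\cup\{\sup_{0\le s\le t}(-B_s)\ge a\}$ together with the reflection principle $\P(\sup_{0\le s\le t}B_s\ge a)=2\P(B_t\ge a)$ to obtain $\P(\sup_{0\le s\le t}|B_s|\ge a)\le4\P(B_t\ge a)$. The standard Gaussian tail bound $\P(B_t\ge a)=\P(N(0,1)\ge a/\sqrt t)\le\tfrac12\exp(-a^2/(2t))$ then yields, with $a=\varepsilon/\sqrt d$,
$$\P\Big(\sup_{0\le s\le t}\|W_s\|\ge\varepsilon\Big)\le2d\,\exp\Big(-\frac{\varepsilon^2}{2dt}\Big).$$

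Finally I would pass from this sub-Gaussian estimate to the claimed linear-exponential one. Writing $u=\varepsilon/\sqrt t$, the remaining task is to verify $2d\exp(-u^2/(2d))\le C_{tail}\exp(-u)$ for all $u\ge0$, i.e.~that $u\mapsto u-u^2/(2d)$ is bounded above; its maximum equals $d/2$, attained at $u=d$, so the estimate holds with $C_{tail}=2d\,e^{d/2}$. I do not expect a genuine obstacle here, since the argument is elementary; the only point requiring care is precisely this last step, where one deliberately weakens the sharp Gaussian decay $\exp(-\varepsilon^2/(2dt))$ to the looser $\exp(-\varepsilon/\sqrt t)$. This weaker form is exactly what makes the later exit-time analysis clean: under the scheme's scaling $\varepsilon\sim\log(1/\delta)\sqrt\delta$ and $t\sim\delta$ the exponent $\varepsilon/\sqrt t$ is of order $\log(1/\delta)$, turning the tail probability into a genuine power of $\delta$.
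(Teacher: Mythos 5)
Your proof is correct, but it takes a genuinely different route from the paper. The paper's argument is a two-line affair: by Brownian scaling, $\P\left(\sup_{0\le s\le t}\|W_s\|\ge\varepsilon\right)=\P\left(\sup_{0\le s\le 1}\|W_s\|\ge \varepsilon/\sqrt t\right)$, and then Doob's submartingale inequality applied to the nonnegative submartingale $\exp(\|W_s\|)$ gives the bound directly with $C_{tail}=\E\left[\exp(\|W_1\|)\right]$ — no coordinatewise splitting, no reflection principle, and the linear-exponential form $\exp(-\varepsilon/\sqrt t)$ emerges immediately rather than by weakening a Gaussian bound. Your route — union bound over coordinates, reflection principle plus the standard Gaussian tail to get the sub-Gaussian estimate $2d\exp\left(-\varepsilon^2/(2dt)\right)$, then relaxing it via maximizing $u\mapsto u-u^2/(2d)$ — is more elementary and buys two things the paper's proof does not make visible: a fully explicit constant $C_{tail}=2d\,e^{d/2}$, and an intermediate bound with genuine Gaussian decay, which makes transparent that the lemma as stated deliberately discards decay (the paper could recover Gaussian decay too, by applying Doob to $\exp(\lambda\|W_s\|)$ and optimizing over $\lambda$, but has no need to). Conversely, the paper's argument is shorter, works in $\R^d$ without dimensional bookkeeping, and avoids the reflection principle entirely. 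All your individual steps check out, including the $t=0$ triviality, the inclusion $\{\sup_s\|W_s\|\ge\varepsilon\}\subseteq\bigcup_i\{\sup_s|W_s^{(i)}|\ge\varepsilon/\sqrt d\}$, the factor $4$ from combining the two-sided union bound with the reflection principle, and the closing observation about why the exponent $\varepsilon/\sqrt t\sim\log(1/\delta)$ is exactly what the exit-time estimates of Lemma \ref{lem:regimes} need.
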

\begin{proof} 
Using the scaling property of Brownian motion and applying Doob's submartingale inequality we obtain
 \begin{align*}
& \P \left(  \sup_{0\le s\le t}\|W_s\| \geq \varepsilon \right)
 =  \P \left(  \sup_{0\le s\le 1}\|W_s\| \geq \frac{\varepsilon}{\sqrt{t}}  \right)
 \\ &= \P \left(  \exp\left(\sup_{0\le s\le 1}\|W_s\| \right)\geq \exp \Big( \frac{\varepsilon}{\sqrt{t}} \Big) \right)
  \le \E \left[ \exp(\|W_1\|) \right] \exp \left(-\frac{\varepsilon}{\sqrt{t}} \right).
\end{align*}
\end{proof}

The next lemma controls the probabilities that  the Euler-Maruyama scheme has increments that are 
relatively large compared to its distance from $\hypsurf$.

\begin{lemma}\label{lem:regimes} Assume Framework \ref{framework} and let $T>0$. Then there exists a constant $C>0$, independent of $\delta$, such that
\begin{enumerate}[$(i)$]  
\item\label{it:inner} \quad $\int_0^T \mathbb{P}\big(\appx_t \notin \Theta^{2\varepsilon_2} ; \appx_{\et} \in \hypsurf^{\varepsilon_2} \big) dt  \leq C \cdot \delta$,
\item\label{it:medium} \quad $\int_0^T \mathbb{P}\big( \|\appx_t- \appx_{\et}\| \geq d(\appx_{\et}, \Theta)  ; \appx_{\et} \in \hypsurf^{\varepsilon_1} \setminus  \hypsurf^{\varepsilon_2} \big) dt  \leq C  \cdot \delta$,
\item\label{it:outer} \quad  $\int_0^T \mathbb{P}\big( \|\appx_t- \appx_{\et}\| \geq \varepsilon_1  ; \appx_{\et} \in \hypsurf^{\varepsilon_0}   \setminus  \hypsurf^{\varepsilon_1} \big) dt  \leq C \cdot \delta$.
\end{enumerate}
\end{lemma}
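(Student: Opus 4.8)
The key structural feature I would exploit is that the step size $h$ is calibrated so that, in each of the three regimes, the relevant displacement threshold divided by $\|\sigma\|_{\infty,\hypsurf^{\varepsilon_0}}\sqrt{h}$ equals exactly $\log(1/\delta)$; fed into the maximal inequality of Lemma~\ref{max_bm}, this renders the probability of a ``large'' increment over a single discretization interval of order $\delta$. I first rewrite each event in the common form $\{\|\appx_t-\appx_{\et}\|\ge\theta,\ \appx_{\et}\in R\}$. For $(ii)$ and $(iii)$ this is already the stated form, with $(\theta,R)=(d(\appx_{\et},\hypsurf),\,\hypsurf^{\varepsilon_1}\setminus\hypsurf^{\varepsilon_2})$ and $(\theta,R)=(\varepsilon_1,\,\hypsurf^{\varepsilon_0}\setminus\hypsurf^{\varepsilon_1})$, respectively. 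For $(i)$ the triangle inequality $d(\appx_t,\hypsurf)\le d(\appx_{\et},\hypsurf)+\|\appx_t-\appx_{\et}\|$ shows that $\{\appx_t\notin\hypsurf^{2\varepsilon_2};\,\appx_{\et}\in\hypsurf^{\varepsilon_2}\}\subseteq\{\|\appx_t-\appx_{\et}\|\ge\varepsilon_2;\,\appx_{\et}\in\hypsurf^{\varepsilon_2}\}$, so I take $(\theta,R)=(\varepsilon_2,\hypsurf^{\varepsilon_2})$.

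Next I decompose the time integral over the discretization intervals and estimate each summand by a per-step tail bound. Writing
\[
\int_0^T\P\big(\|\appx_t-\appx_{\et}\|\ge\theta;\,\appx_{\et}\in R\big)\,dt=\E\Big[\sum_{k:\tau_k<T}\1_{\{\appx_{\tau_k}\in R\}}\int_{\tau_k}^{\tau_{k+1}\wedge T}\1_{\{\|\appx_t-\appx_{\tau_k}\|\ge\theta\}}\,dt\Big],
\]
I bound the inner integral by $(\tau_{k+1}\wedge T-\tau_k)\,\1_{\{\sup_{s\in[\tau_k,\tau_{k+1}]}\|\appx_s-\appx_{\tau_k}\|\ge\theta\}}$. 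On $R\subseteq\hypsurf^{\varepsilon_0}$ the coefficients are bounded by Assumption~\ref{ass:existence}.\ref{ass:existence-musigmalocbounded}, hence $\|\appx_s-\appx_{\tau_k}\|\le\|\mu\|_{\infty,\hypsurf^{\varepsilon_0}}h+\|\sigma\|_{\infty,\hypsurf^{\varepsilon_0}}\sup_{0\le u\le h}\|W^{\tau_k}_u\|$ with $h=h(\appx_{\tau_k},\delta)$; writing $\eta:=\|\mu\|_{\infty,\hypsurf^{\varepsilon_0}}h/\theta$, the displacement event forces $\sup_{0\le u\le h}\|W^{\tau_k}_u\|\ge\theta(1-\eta)/\|\sigma\|_{\infty,\hypsurf^{\varepsilon_0}}$. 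Conditioning on $\appx_{\tau_k}$ via the strong Markov property (Lemma~\ref{markov}), so that $(W^{\tau_k}_u)_{u\ge0}$ is a Brownian motion independent of $\appx_{\tau_k}$, Lemma~\ref{max_bm} with $t=h$ yields, on $\{\appx_{\tau_k}\in R\}$,
\[
\P\big(\textstyle\sup_{0\le u\le h}\|W^{\tau_k}_u\|\ge\theta(1-\eta)/\|\sigma\|_{\infty,\hypsurf^{\varepsilon_0}}\,\big|\,\F_{\tau_k}\big)\le C_{tail}\exp\big(-(1-\eta)\log(1/\delta)\big)=C_{tail}\,\delta^{\,1-\eta},
\]
where the crucial simplification uses the calibration $\theta/(\|\sigma\|_{\infty,\hypsurf^{\varepsilon_0}}\sqrt h)=\log(1/\delta)$, which holds identically in all three regimes (in $(ii)$ even though $\theta=d(\appx_{\tau_k},\hypsurf)$ varies).

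It remains to check that $\eta$ is negligible and to sum up. In each regime the explicit forms of $h$, $\varepsilon_1$, $\varepsilon_2$ give $\eta\log(1/\delta)\le\|\mu\|_{\infty,\hypsurf^{\varepsilon_0}}\sqrt{\delta}/\|\sigma\|_{\infty,\hypsurf^{\varepsilon_0}}\to0$, whence $\delta^{1-\eta}=\delta\,e^{\eta\log(1/\delta)}\le C\delta$ uniformly for small $\delta$. Substituting the conditional bound, taking iterated expectations, and using $\sum_{k:\tau_k<T}(\tau_{k+1}\wedge T-\tau_k)=T$ then gives $\int_0^T\P(\cdots)\,dt\le C\delta\,T$, which is each of $(i)$--$(iii)$ with a $\delta$-independent constant. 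The main obstacle is precisely keeping the exponent in Lemma~\ref{max_bm} equal to $\log(1/\delta)$: one must treat the drift as the multiplicative perturbation $\theta(1-\eta)$ with $\eta\log(1/\delta)\to0$ rather than crudely halving the threshold, since a per-step probability of order $\delta^{1/2}$ would be fatal for $(iii)$, where the summed step length is only controlled by $T$ (for $(i)$ and $(ii)$ one could instead invoke the occupation-time bound of Lemma~\ref{it:inner_cont}, but this is not needed).
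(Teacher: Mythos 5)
Your proposal is correct and follows essentially the same route as the paper's proof: the same inclusions reduce each event to a per-step increment tail, the same conditioning via Lemma \ref{markov} and the Gaussian maximal bound of Lemma \ref{max_bm} are used, and your multiplicative drift correction $\theta(1-\eta)$ with $\eta\log(1/\delta)\le \|\mu\|_{\infty,\hypsurf^{\varepsilon_0}}\sqrt{\delta}/\|\sigma\|_{\infty,\hypsurf^{\varepsilon_0}}$ is exactly the paper's additive estimate $(\theta-\|\mu\|_{\infty,\hypsurf^{\varepsilon_0}}h)/(\|\sigma\|_{\infty,\hypsurf^{\varepsilon_0}}\sqrt{h})\ge \log(1/\delta)-\|\mu\|_{\infty,\hypsurf^{\varepsilon_0}}/\|\sigma\|_{\infty,\hypsurf^{\varepsilon_0}}$ written in another form, both exploiting the calibration $\theta/(\|\sigma\|_{\infty,\hypsurf^{\varepsilon_0}}\sqrt{h})=\log(1/\delta)$ in all three regimes. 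The only cosmetic difference is that you decompose the time integral into a sum over discretization intervals, whereas the paper keeps the integral and bounds $\P(\appx_{\et}\in R)\le 1$.
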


\begin{proof}
(i) \, Note that
$$  \left \{ \omega \in \Omega: \appx_t(\omega)\notin \Theta^{2\varepsilon_2} ; \appx_{\et}(\omega) \in \hypsurf^{\varepsilon_2}  \right \} \subseteq    \left \{ \omega \in \Omega: \|\appx_t(\omega)-  \appx_{\et}(\omega)\| \geq \varepsilon_2 ; \appx_{\et}(\omega) \in \hypsurf^{\varepsilon_2}  \right \} \, $$
and that
\begin{align*}
&\int_0^T \P \big( \|\appx_t-  \appx_{\et}\| \geq \varepsilon_2 ;\appx_{\et} \in \hypsurf^{\varepsilon_2} \big) dt   \leq
 \int_0^T \P \left( \sup_{t \in [\et, \et+h(\appx_{\et},\delta)]} \|\appx_t-  \appx_{\et}\| \geq \varepsilon_2 ; \appx_{\et} \in \hypsurf^{\varepsilon_2} \right) dt\\
 & \qquad =\int_0^T \P \left( \sup_{t \in [\et, \et+h(\appx_{\et},\delta)]} \|\appx_t-  \appx_{\et}\| \geq \varepsilon_2 \Big{|} \appx_{\et} \in \hypsurf^{\varepsilon_2} \right) \P(\appx_{\et} \in \hypsurf^{\varepsilon_2}) dt.
\end{align*}
Here we set the value of the above conditional probability to zero, if $ \P(\appx_{\et} \in \hypsurf^{\varepsilon_2})=0$.
By Lemma  \ref{markov} we obtain that
\begin{align*}
&  \int_0^T \P \left( \sup_{t \in [\et, \et+h(\appx_{\et},\delta)]} \|\appx_t-  \appx_{\et}\| \geq \varepsilon_2 \Big{|} \appx_{\et} \in \hypsurf^{\varepsilon_2} \right) \P(\appx_{\et} \in \hypsurf^{\varepsilon_2}) dt
  \\ & \qquad =
  \int_0^T\int_{\hypsurf^{\varepsilon_2}} \P \left( \sup_{t \in [0, \delta^2]} \|\Phi(y,W^{\et})(t)- y\| \geq \varepsilon_2 \right) d\, \P^{\appx_{\et}}(y)  dt
    \\ & \qquad \le
      \int_0^T\int_{\hypsurf^{\varepsilon_2}} \P \left( \sup_{t \in [0,1]} \|W_t\| \geq \frac{\varepsilon_2- \delta^2 \|\mu\|_{\infty,\hypsurf^{\varepsilon_0}} }{\delta\|\sigma\|_{\infty,\hypsurf^{\varepsilon_0}}} \right) d\,  \P^{\appx_{\et}}(y)  dt.
\end{align*}
Recall that $\varepsilon_2=\|\sigma\|_{\infty,\hypsurf^{\varepsilon_0}}   \log(1/\delta)\delta$, and hence
\begin{align}\label{eq:lemma-help1}
 \frac{\varepsilon_2- \delta^2 \|\mu\|_{\infty,\hypsurf^{\varepsilon_0}} }{ \delta  \|\sigma\|_{\infty,\hypsurf^{\varepsilon_0}}} =-\log(\delta)- \frac{\|\mu\|_{\infty,\hypsurf^{\varepsilon_0}}}{\|\sigma\|_{\infty,\hypsurf^{\varepsilon_0}}}\delta \geq - \log(\delta)- \frac{\|\mu\|_{\infty,\hypsurf^{\varepsilon_0}}}{\|\sigma\|_{\infty,\hypsurf^{\varepsilon_0}}}.
\end{align}
An application of Lemma \ref{max_bm} together with \eqref{eq:lemma-help1} now yields
\begin{align*}
     & \int_0^T\int_{\hypsurf^{\varepsilon_2}} \P \left( \sup_{t \in [0,1]} \|W_t\| \geq \frac{\varepsilon_2- \delta^2 \|\mu\|_{\infty,\hypsurf^{\varepsilon_0}} }{\delta\|\sigma\|_{\infty,\hypsurf^{\varepsilon_0}}} \right)  \,  \P^{\appx_{\et}}(y) dt\\
     &  \qquad \le C_{\textrm{tail}} \int_0^T \exp\left( \log(\delta)\right) \exp \left( \frac{\|\mu\|_{\infty,\hypsurf^{\varepsilon_0}}}{\| \sigma \|_{\infty,\hypsurf^{\varepsilon_0}}} \right) \int_{\hypsurf^{\varepsilon_2}} d\,  \P^{\appx_{\et}}(y) dt
     \leq  C \cdot \delta,
\end{align*}
with $$C=C_{\textrm{tail}} T \exp \left( \frac{\|\mu\|_{\infty,\hypsurf^{\varepsilon_0}}}{\| \sigma \|_{\infty,\hypsurf^{\varepsilon_0}}} \right).$$

(ii)\, Observe that
\begin{align*}
& \int_0^T \mathbb{P}\big( \|\appx_t- \appx_{\et}\| \geq d(\appx_{\et}, \Theta)  ; \appx_{\et} \in \hypsurf^{\varepsilon_1} \setminus  \hypsurf^{\varepsilon_2} \big) dt  \\ 
& \qquad \leq 
\int_0^T \mathbb{P}\left(\sup_{t \in [\et, \et + h(\appx_{\et},\delta)] }\|\appx_t- \appx_{\et}\| \geq d(\appx_{\et}, \Theta)  ; \appx_{\et} \in \hypsurf^{\varepsilon_1} \setminus  \hypsurf^{\varepsilon_2} \right) dt\\ 
& \qquad =
\int_0^T \mathbb{P}\left(\sup_{t \in [\et, \et + h(\appx_{\et},\delta)] }\|\appx_t- \appx_{\et}\| \geq d(\appx_{\et}, \Theta)  \Big{|} \appx_{\et} \in \hypsurf^{\varepsilon_1} \setminus  \hypsurf^{\varepsilon_2} \right) \P( \appx_{\et} \in \hypsurf^{\varepsilon_1} \setminus  \hypsurf^{\varepsilon_2} )dt.
\end{align*}
Using again Lemma \ref{markov} we obtain
\begin{align*}
&\int_0^T \mathbb{P}\left(\sup_{t \in [\et, \et + h(\appx_{\et},\delta)] }\|\appx_t- \appx_{\et}\| \geq d(\appx_{\et}, \Theta)  \Big{|} \appx_{\et} \in \hypsurf^{\varepsilon_1} \setminus  \hypsurf^{\varepsilon_2} \right) \P( \appx_{\et} \in \hypsurf^{\varepsilon_1} \setminus  \hypsurf^{\varepsilon_2} )dt
 \\ & \qquad =
  \int_0^T\int_{ \hypsurf^{\varepsilon_1} \setminus  \hypsurf^{\varepsilon_2}} \P \left( \sup_{t \in [0, h(y, \delta)]}\|\Phi(y,W^{\et})(t)-y\| \geq d(y,\Theta) \right)  d\,  \P^{\appx_{\et}}(y)  dt
  \\ & \qquad \le
      \int_0^T\int_{\hypsurf^{\varepsilon_1} \setminus  \hypsurf^{\varepsilon_2}} \P \left( \sup_{t \in [0,1]} \|W_t\| \geq \frac{d(y,\hypsurf)-\|\mu\|_{\infty,\hypsurf^{\varepsilon_0}} h(y,\delta)}{h(y,\delta)^{1/2} \|\sigma\|_{\infty,\hypsurf^{\varepsilon_0}}} \right)  d\,  \P^{\appx_{\et}}(y)  dt.
\end{align*}
Since $$h(x,\delta)=
       \frac{1}{\|\sigma\|_{\infty,\hypsurf^{\varepsilon_0}}^2} \left( \frac{d(x,\hypsurf)}{ \log (1/\delta)}\right)^2, \qquad x \in \hypsurf^{\varepsilon_1} \backslash  \hypsurf^{\varepsilon_2},$$ we arrive at
\begin{align}\label{eq:lemma-help2}
 \frac{d(y,\hypsurf)-\|\mu\|_{\infty,\hypsurf^{\varepsilon_0}} h(y,\delta)}{h(y,\delta)^{1/2} \|\sigma\|_{\infty,\hypsurf^{\varepsilon_0}}} =-\log(\delta)- \frac{\|\mu\|_{\infty,\hypsurf^{\varepsilon_0}}}{\|\sigma\|_{\infty,\hypsurf^{\varepsilon_0}}}h(y,\delta)^{1/2}\ge-\log(\delta)- \frac{\|\mu\|_{\infty,\hypsurf^{\varepsilon_0}}}{\|\sigma\|_{\infty,\hypsurf^{\varepsilon_0}}}.
\end{align}
Lemma \ref{max_bm} together with \eqref{eq:lemma-help2} yields that
\begin{align*}
     & \int_0^T \int_{ \hypsurf^{\varepsilon_1} \setminus  \hypsurf^{\varepsilon_2}} \P \left( \sup_{t \in [0,1]} \|W_t\| \geq \frac{d(y,\hypsurf)-\|\mu\|_{\infty,\hypsurf^{\varepsilon_0}} h(y,\delta)}{h(y,\delta)^{1/2} \|\sigma\|_{\infty,\hypsurf^{\varepsilon_0}}} \right)  d\,  \P^{\appx_{\et}}(y) dt\
     \le C \cdot \delta.
\end{align*}

(iii)\, This can be shown along the same lines as (ii) taking into account that the Euler-Maruyama scheme under the condition $\appx_{\et} \notin \hypsurf^{\varepsilon_1}$ has step size $\delta$ and that $\varepsilon_1= \| \sigma \|_{\infty,\hypsurf^{\varepsilon_0}} \log(1/\delta) \sqrt{\delta}.$
\end{proof}

Using the previous lemmas we obtain:
\begin{lemma} \label{it:discrete} Assume Framework \ref{framework} and let $T>0$. There exists a constant $C>0$ such that
$$\int_0^T \P\left( \appx_{\et} \in \Theta^{\varepsilon_2}  \right)dt  \leq  \  C\cdot (1+\log(1/\delta)) \delta ,$$
and
$$\int_0^T \P\left( \appx_{\et} \in \Theta^{\varepsilon_1} \setminus \Theta^{\varepsilon_2}  \right)dt  \leq  \  C \cdot (1+\log(1/\delta)) \sqrt{\delta} .$$
\end{lemma}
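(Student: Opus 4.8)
The plan is to bound each integrand, which concerns the value of the scheme at the last grid point $\et$ before $t$, by quantities involving the \emph{running} value $\appx_t$ — for which the occupation-time estimate of Lemma \ref{it:inner_cont} is available — plus a remainder that is exactly one of the exit-time estimates of Lemma \ref{lem:regimes}. The guiding observation is that, unless the scheme makes an atypically large increment on the current step, $\appx_t$ lies in a tube around $\Theta$ whose radius is a fixed multiple of the radius of the tube containing $\appx_{\et}$; the large-increment event is precisely what Lemma \ref{lem:regimes} controls, and the complementary event is then handled by Krylov's estimate. The whole argument thus reduces to two set inclusions followed by applications of two lemmas already proved.

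For the first bound I would decompose, for each fixed $t$,
\[
\P\big(\appx_{\et}\in\Theta^{\varepsilon_2}\big) \le \P\big(\appx_t\in\Theta^{2\varepsilon_2}\big) + \P\big(\appx_{\et}\in\Theta^{\varepsilon_2};\, \appx_t\notin\Theta^{2\varepsilon_2}\big).
\]
Since $2\varepsilon_2<\varepsilon_0/2$ by Framework \ref{framework}, the ``in particular'' part of Lemma \ref{it:inner_cont} gives $\int_0^T\P(\appx_t\in\Theta^{2\varepsilon_2})\,dt\le 2C\varepsilon_2$, and $\varepsilon_2=\|\sigma\|_{\infty,\hypsurf^{\varepsilon_0}}\log(1/\delta)\delta$ is of order $(1+\log(1/\delta))\delta$. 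The second term integrates to at most $C\delta$ by part (i) of Lemma \ref{lem:regimes}. Adding the two contributions yields the first estimate.

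For the second bound I would instead split according to whether the increment on the current step exceeds the distance of $\appx_{\et}$ to $\Theta$:
\[
\P\big(\appx_{\et}\in\Theta^{\varepsilon_1}\setminus\Theta^{\varepsilon_2}\big) \le \P\big(\|\appx_t-\appx_{\et}\|\ge d(\appx_{\et},\Theta);\,\appx_{\et}\in\Theta^{\varepsilon_1}\setminus\Theta^{\varepsilon_2}\big) + \P\big(\appx_{\et}\in\Theta^{\varepsilon_1}\setminus\Theta^{\varepsilon_2};\,\|\appx_t-\appx_{\et}\|< d(\appx_{\et},\Theta)\big).
\]
The first term integrates to at most $C\delta\le C\sqrt{\delta}$ by part (ii) of Lemma \ref{lem:regimes}. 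On the event in the second term, the $1$-Lipschitz property of $d(\cdot,\Theta)$ gives $d(\appx_t,\Theta)\le d(\appx_{\et},\Theta)+\|\appx_t-\appx_{\et}\|<2\,d(\appx_{\et},\Theta)<2\varepsilon_1$, so $\appx_t\in\Theta^{2\varepsilon_1}$; since $2\varepsilon_1<\varepsilon_0/2$ by Framework \ref{framework}, Lemma \ref{it:inner_cont} again applies and bounds its time integral by $2C\varepsilon_1$, which is of order $(1+\log(1/\delta))\sqrt{\delta}$. Adding the two contributions yields the second estimate.

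The only genuine subtlety — hence the step I would treat most carefully — is the geometric transfer from the grid value $\appx_{\et}$ to the running value $\appx_t$ via the triangle inequality: one must check that the enlarged tube radii $2\varepsilon_2$ and $2\varepsilon_1$ remain below $\varepsilon_0/2$, so that the occupation-time estimate is legitimately applicable (this is guaranteed by $\varepsilon_1<\varepsilon_0/4$ in Framework \ref{framework}), and that $\delta\le\sqrt{\delta}$ for $\delta\in(0,1)$ is used to absorb the $O(\delta)$ exit contributions into the claimed $O\big((1+\log(1/\delta))\sqrt{\delta}\big)$ bound. Everything else is routine bookkeeping of the explicit forms of $\varepsilon_1$ and $\varepsilon_2$.
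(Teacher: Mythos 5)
Your proposal is correct and takes essentially the same route as the paper: the identical splitting into the event that $\appx_t$ remains in the doubled tube ($\Theta^{2\varepsilon_2}$ resp.\ $\Theta^{2\varepsilon_1}$), handled by the occupation-time estimate of Lemma \ref{it:inner_cont}, versus the large-increment event, handled by parts (i) resp.\ (ii) of Lemma \ref{lem:regimes}. The only cosmetic difference is that in the second bound you partition on $\{\|\appx_t-\appx_{\et}\|< d(\appx_{\et},\Theta)\}$ and deduce $\appx_t\in\Theta^{2\varepsilon_1}$ via the $1$-Lipschitz property of $d(\cdot,\Theta)$, whereas the paper partitions on $\{\appx_t\in\Theta^{2\varepsilon_1}\}$ and deduces the increment bound -- these are mirror images of the same set inclusion, and your verification that $2\varepsilon_1<\varepsilon_0/2$ (from $\varepsilon_1<\varepsilon_0/4$ in Framework \ref{framework}) is exactly the check needed.
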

 
\begin{proof}
To establish the first estimate we write 
\begin{align*}
\int_0^T \P\left(\appx_{\et} \in \Theta^{\varepsilon_2}  \right)dt  & =  \int_0^T \P\left( \appx_{\et} \in \Theta^{\varepsilon_2}; \appx_{t} \notin \Theta^{2 \varepsilon_2}  \right)dt +\int_0^T \P\left(\appx_{\et} \in \Theta^{\varepsilon_2};  \appx_{t} \in \Theta^{2 \varepsilon_2} \right)dt
 \\ & \leq   \int_{0}^T \P\left( \appx_{\et} \in \Theta^{\varepsilon_2};\appx_{t} \notin \Theta^{2 \varepsilon_2} \right)dt+ \int_0^T \P\left( \appx_{t} \in \Theta^{2 \varepsilon_2} \right) dt .
 \end{align*}
We conclude by Lemma \ref{lem:regimes}\eqref{it:inner} and Lemma $\ref{it:inner_cont}$.
Analogously  the second estimate follows from
\begin{align*}
& \int_0^T \P\left(\appx_{\et} \in  \Theta^{\varepsilon_1} \setminus \Theta^{\varepsilon_2}  \right)dt  \\ &  \qquad  =  \int_0^T \P\left( \appx_{\et} \in \Theta^{\varepsilon_1} \setminus \Theta^{\varepsilon_2}; \appx_{t} \notin \Theta^{2 \varepsilon_1}  \right)dt +\int_0^T \P\left(\appx_{\et} \in \Theta^{\varepsilon_1} \setminus \Theta^{\varepsilon_2};  \appx_{t} \in \Theta^{2 \varepsilon_1} \right)dt
 \\ &  \qquad \leq   
 \int_0^T \mathbb{P}\left( \|\appx_t- \appx_{\et}\| \geq d(\appx_{\et}, \Theta)  ; \appx_{\et} \in \hypsurf^{\varepsilon_1} \setminus  \hypsurf^{\varepsilon_2} \right)
 + \int_0^T \P\left( \appx_{t} \in \Theta^{2 \varepsilon_1} \right) dt .
 \end{align*}
We conclude  by Lemma \ref{lem:regimes}\eqref{it:medium} and Lemma $\ref{it:inner_cont}$.

\end{proof}
Finally we prove the following refinement of Lemma \ref{lem:regimes}\eqref{it:medium}:
\begin{lemma}\label{lem:regimes_extra} Assume Framework \ref{framework}, let $T> 0$ and $\alpha \in (0,1]$. There exists a  constant $C>0$ such that
$$ \int_0^T \mathbb{P}\big( \|\appx_t- \appx_{\et}\| \geq \alpha \cdot d(\appx_{\et}, \Theta)  ; \appx_{\et} \in \hypsurf^{\varepsilon_1} \setminus  \hypsurf^{\varepsilon_2} \big) dt  \leq C \cdot (1+ \log(1/\delta)) \delta^{\alpha + \frac{1}{2}}.$$
\end{lemma}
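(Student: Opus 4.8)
The plan is to follow the proof of Lemma~\ref{lem:regimes}\eqref{it:medium} up to the point where the conditional tail probability has been isolated, and then to harvest the additional factor $\delta^{1/2}$ from the sharper estimate of Lemma~\ref{it:discrete} instead of bounding the mass of the shell $\hypsurf^{\varepsilon_1}\setminus\hypsurf^{\varepsilon_2}$ trivially by $T$. Concretely, I would first enlarge the event $\{\|\appx_t-\appx_{\et}\|\ge \alpha\, d(\appx_{\et},\hypsurf)\}$ to the event in which $\sup_{s\in[\et,\et+h(\appx_{\et},\delta)]}\|\appx_s-\appx_{\et}\|$ exceeds $\alpha\, d(\appx_{\et},\hypsurf)$, condition on $\appx_{\et}=y$ via Lemma~\ref{markov}, and bound the increment by its drift part $\|\mu\|_{\infty,\hypsurf^{\varepsilon_0}}\,h(y,\delta)$ plus $\|\sigma\|_{\infty,\hypsurf^{\varepsilon_0}}\sup_{s\le h(y,\delta)}\|W_s\|$. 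Brownian scaling then reduces the conditional probability to $\P\big(\sup_{s\in[0,1]}\|W_s\|\ge \Lambda(y)\big)$, where the threshold $\Lambda(y)$ depends on $y$ only through $d(y,\hypsurf)$.

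The key computation is to evaluate $\Lambda(y)$ on the shell. Since $h(y,\delta)^{1/2}=d(y,\hypsurf)/(\|\sigma\|_{\infty,\hypsurf^{\varepsilon_0}}\log(1/\delta))$ for $y\in\hypsurf^{\varepsilon_1}\setminus\hypsurf^{\varepsilon_2}$, the factor $d(y,\hypsurf)$ cancels in the leading term exactly as in \eqref{eq:lemma-help2}, and I would obtain
\[
\Lambda(y)=-\alpha\log(\delta)-\frac{\|\mu\|_{\infty,\hypsurf^{\varepsilon_0}}}{\|\sigma\|_{\infty,\hypsurf^{\varepsilon_0}}}\,h(y,\delta)^{1/2}\ge -\alpha\log(\delta)-\frac{\|\mu\|_{\infty,\hypsurf^{\varepsilon_0}}}{\|\sigma\|_{\infty,\hypsurf^{\varepsilon_0}}},
\]
where the last bound uses $h(y,\delta)^{1/2}<\sqrt\delta<1$ on the shell. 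The crucial point is that this lower bound is \emph{uniform} in $y$. Applying Lemma~\ref{max_bm} then yields a conditional tail bounded by $C_{\mathrm{tail}}\exp\!\big(\|\mu\|_{\infty,\hypsurf^{\varepsilon_0}}/\|\sigma\|_{\infty,\hypsurf^{\varepsilon_0}}\big)\,\delta^{\alpha}$, again uniformly in $y\in\hypsurf^{\varepsilon_1}\setminus\hypsurf^{\varepsilon_2}$.

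Finally, because this bound no longer depends on $y$, I would pull it out of the spatial integral $\int_{\hypsurf^{\varepsilon_1}\setminus\hypsurf^{\varepsilon_2}}d\P^{\appx_{\et}}(y)=\P(\appx_{\et}\in\hypsurf^{\varepsilon_1}\setminus\hypsurf^{\varepsilon_2})$ and integrate in time, which leaves $\int_0^T\P(\appx_{\et}\in\hypsurf^{\varepsilon_1}\setminus\hypsurf^{\varepsilon_2})\,dt$. This is precisely the quantity controlled by the second estimate of Lemma~\ref{it:discrete}, namely $\le C(1+\log(1/\delta))\sqrt\delta$; multiplying the two bounds gives the claimed $C(1+\log(1/\delta))\delta^{\alpha+1/2}$. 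The one genuinely delicate point, and the step that distinguishes this argument from that of Lemma~\ref{lem:regimes}\eqref{it:medium}, is the observation that the conditional tail bound is uniform across the shell, so that it factorizes; this is exactly what permits replacing the crude $\int_0^T\P(\cdots)\,dt\le T$ used there by the sharper $O\big((1+\log(1/\delta))\sqrt\delta\big)$ estimate of Lemma~\ref{it:discrete}, thereby producing the extra $\sqrt\delta$.
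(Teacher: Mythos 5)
Your proposal is correct and follows essentially the same route as the paper's own proof: the paper likewise reruns the argument of Lemma~\ref{lem:regimes}\eqref{it:medium} to reduce the probability to $\P\big(\sup_{t\in[0,1]}\|W_t\|\ge -\alpha\log(\delta)-\|\mu\|_{\infty,\hypsurf^{\varepsilon_0}}/\|\sigma\|_{\infty,\hypsurf^{\varepsilon_0}}\big)$ uniformly in $y$, applies Lemma~\ref{max_bm} to extract the factor $\delta^{\alpha}$, and then invokes the second estimate of Lemma~\ref{it:discrete} for $\int_0^T\P(\appx_{\et}\in\hypsurf^{\varepsilon_1}\setminus\hypsurf^{\varepsilon_2})\,dt$ to gain the additional $(1+\log(1/\delta))\sqrt{\delta}$. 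Your identification of the uniformity of the conditional tail bound as the point permitting this factorization is exactly the mechanism the paper uses.
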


\begin{proof} Here we follow the same lines as in the proof of  Lemma \ref{lem:regimes}\eqref{it:medium} to obtain
\begin{align*}
&\int_0^T \mathbb{P}\left(\|\appx_t- \appx_{\et}\| \geq  \alpha \cdot d(\appx_{\et}, \Theta)  ; \appx_{\et} \in \hypsurf^{\varepsilon_1} \setminus  \hypsurf^{\varepsilon_2} \right) dt
\\ & \leq       \int_0^T\int_{\hypsurf^{\varepsilon_1} \setminus  \hypsurf^{\varepsilon_2}} \P \left( \sup_{t \in [0,1]} \|W_t\| \geq -\alpha \log(\delta) - \frac{\|\mu\|_{\infty,\hypsurf^{\varepsilon_0}}}{\|\sigma\|_{\infty,\hypsurf^{\varepsilon_0}}} \right)  d\,  \P^{\appx_{\et}}(y) dt.
\end{align*}
Lemma \ref{max_bm} now yields that
\begin{align*}
&\int_0^T \mathbb{P}\left(\|\appx_t- \appx_{\et}\| \geq \alpha \cdot d(\appx_{\et}, \Theta)  ; \appx_{\et} \in \hypsurf^{\varepsilon_1} \setminus  \hypsurf^{\varepsilon_2} \right)dt
\\ & \leq    C_{\textrm{tail}}  \exp \left( \frac{\|\mu\|_{\infty,\hypsurf^{\varepsilon_0}}}{\| \sigma \|_{\infty,\hypsurf^{\varepsilon_0}}} \right) \delta^{\alpha} \int_0^T \P( \appx_{\et} \in \hypsurf^{\varepsilon_1} \setminus  \hypsurf^{\varepsilon_2}) dt.
\end{align*}
The second case from Lemma \ref{it:discrete} concludes the proof.
\end{proof}

\section{Convergence Analysis}
\label{sec:convergence}

We are ready to prove the main convergence result.

\begin{theorem}\label{thm:conv}
Assume Framework \ref{framework} and let $T>0$.
 Moreover, let $h\colon\R^d\times (0,1)\to (0,1)$ be given by  \eqref{eq:step size_2} and \eqref{eq:step_bd_2}, and let $X^h$ be given by \eqref{euler_anf_2} and \eqref{euler_it_2}. Then there exists a constant $C_{\textrm{rmse}}>0$ such that
 \begin{align*}
 \E \left[ \sup_{0\le t\le T}  \|X_t-\appx_t \|^2 \right] \le C_{\textrm{rmse}}^2 \cdot (1+\log(1/\delta)) \delta.
\end{align*}
\end{theorem}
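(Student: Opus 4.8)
The plan is to push the error through the transformation $G$ onto the SDE \eqref{eq:SDEtransf}, whose coefficients $\mu_G,\sigma_G$ are globally Lipschitz (Lemma \ref{lem:propG_lip}). Since $X_t=G^{-1}(Z_t)$ and $G^{-1}$ is Lipschitz (Lemma \ref{lem:propG}), we have
\[
\sup_{0\le t\le T}\|X_t-\appx_t\|\le L_{G^{-1}}\sup_{0\le t\le T}\|Z_t-G(\appx_t)\|,
\]
so it suffices to bound $\E[\sup_{t\le T}\|Z_t-G(\appx_t)\|^2]$. First I would apply It\=o's formula to $\tilde Z_t:=G(\appx_t)$, which is justified by Lemma \ref{lem:propG} even though $G''$ jumps across $\hypsurf$, since the scheme spends zero Lebesgue time on $\hypsurf$. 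This gives $d\tilde Z_t=a_t\,dt+b_t\,dW_t$ with
\[
b_t=G'(\appx_t)\sigma(\appx_{\et}),\qquad a_t=G'(\appx_t)\mu(\appx_{\et})+\tfrac12\tr\!\big[\sigma(\appx_{\et})^\top G''(\appx_t)\sigma(\appx_{\et})\big].
\]
Comparing with \eqref{musig_G}, the point is that $a_t,b_t$ agree with $\mu_G(\tilde Z_t),\sigma_G(\tilde Z_t)$ except that $\mu,\sigma$ are frozen at $\appx_{\et}$ rather than $\appx_t$.

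Setting $D_t:=Z_t-\tilde Z_t$, I would decompose
\[
\mu_G(Z_t)-a_t=\big[\mu_G(Z_t)-\mu_G(\tilde Z_t)\big]+\big[\mu_G(\tilde Z_t)-a_t\big],
\]
and analogously for $\sigma_G(Z_t)-b_t$: the first bracket is Lipschitz in $D_t$, while the second is a one-step discretization error collected into a remainder $R_t$. Using the Lipschitz property of $\mu_G,\sigma_G$, the Burkholder-Davis-Gundy and Cauchy-Schwarz inequalities, and the finiteness of moments (Lemma \ref{msq-appx} together with standard moment bounds for $Z$), a Gronwall argument reduces everything to proving $\E[\sup_{0\le t\le T}\|R_t\|^2]\le C\,(1+\log(1/\delta))\,\delta$.

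The remainder is $R_t=\int_0^t[\mu_G(\tilde Z_s)-a_s]\,ds+\int_0^t[\sigma_G(\tilde Z_s)-b_s]\,dW_s$, where
\[
\mu_G(\tilde Z_s)-a_s=G'(\appx_s)\big[\mu(\appx_s)-\mu(\appx_{\es})\big]+\tfrac12\tr\!\big[\sigma(\appx_s)^\top G''(\appx_s)\sigma(\appx_s)-\sigma(\appx_{\es})^\top G''(\appx_s)\sigma(\appx_{\es})\big]
\]
and $\sigma_G(\tilde Z_s)-b_s=G'(\appx_s)[\sigma(\appx_s)-\sigma(\appx_{\es})]$. Since $G',G''$ are bounded (Lemma \ref{lem:propG}) and $\sigma$ is globally Lipschitz, the trace term and the diffusion remainder are dominated by $\|\appx_s-\appx_{\es}\|$; because $|s-\es|\le\delta$, the conditional bound $\E[\|\appx_s-\appx_{\es}\|^2\mid\F_{\es}]\le C(1+\|\appx_{\es}\|^2)\delta$ and Lemma \ref{msq-appx} make these pieces $O(\delta)$ after integration and a BDG/Doob estimate.

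The main obstacle is the genuinely discontinuous drift term $\int_0^t G'(\appx_s)[\mu(\appx_s)-\mu(\appx_{\es})]\,ds$, which I would control through $\int_0^T\E[\|\mu(\appx_s)-\mu(\appx_{\es})\|^2]\,ds$ by splitting the state space along the step-size regimes of \eqref{eq:step size_2}. On $\{\appx_{\es}\in\hypsurf^{\varepsilon_2}\}$ the jump is merely $O(1)$, but boundedness of $\mu$ on $\hypsurf^{\varepsilon_0}$ reduces its contribution to $\int_0^T\P(\appx_{\es}\in\hypsurf^{\varepsilon_2})\,dt\le C(1+\log(1/\delta))\delta$ by Lemma \ref{it:discrete}; this single term produces the logarithmic factor and the final rate. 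On the remaining regimes I would separate the no-crossing event $\{\|\appx_s-\appx_{\es}\|<d(\appx_{\es},\hypsurf)\}$, on which the whole segment from $\appx_{\es}$ to $\appx_s$ stays in the ball $B(\appx_{\es},d(\appx_{\es},\hypsurf))$, which does not meet $\hypsurf$, so that the piecewise Lipschitz property gives $\|\mu(\appx_s)-\mu(\appx_{\es})\|\le L_\mu\|\appx_s-\appx_{\es}\|$ (there intrinsic and Euclidean distance coincide), from the crossing event, whose time-integrated probability is $O(\delta)$ by Lemma \ref{lem:regimes}. For $\appx_{\es}\notin\hypsurf^{\varepsilon_0}$ the crossing event has exponentially small probability (Lemma \ref{max_bm}) and the linear growth of $\mu$ is absorbed by the moments of Lemma \ref{msq-appx} via Cauchy-Schwarz. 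Summing the regimes yields the required $O((1+\log(1/\delta))\delta)$ bound. The delicate feature throughout is that the adaptive step size is tuned so that these occupation and exit estimates exactly balance the $O(1)$ drift jumps against the time the scheme spends near $\hypsurf$.
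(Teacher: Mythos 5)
Your proposal is correct in substance and shares the backbone of the paper's proof (transform by $G$, It\=o's formula for $G(\appx_t)$, regime splitting via Lemmas \ref{lem:regimes} and \ref{it:discrete}, Gronwall), but it deviates in two genuine ways. First, you dispense with the auxiliary scheme $\appz$: the paper splits $Z-G(\appx)=(Z-\appz)+(\appz-G(\appx))$ and needs the separate classical Euler estimate \eqref{eq:est-euler} for the Lipschitz SDE \eqref{eq:SDEtransf}, whereas you run Gronwall directly on $Z_t-G(\appx_t)$; this is a mild simplification, and it works because your Lipschitz brackets $\mu_G(Z_s)-\mu_G(G(\appx_s))$ and $\sigma_G(Z_s)-\sigma_G(G(\appx_s))$ feed the Gronwall function exactly as the paper's terms $E_3,E_4$ do. Second, and more substantively, writing $\nu(x_1,x_2)=G'(x_1)\mu(x_2)+\tfrac12\tr(\sigma(x_2)^\top G''(x_1)\sigma(x_2))$ as in the paper's proof, your one-step remainder is $\nu(\appx_s,\appx_s)-\nu(\appx_s,\appx_{\es})$ (freeze $G',G''$ at $\appx_s$, vary $\mu,\sigma$), while the paper's $E_1$ is $\nu(\appx_s,\appx_{\es})-\nu(\appx_{\es},\appx_{\es})$ (freeze $\mu,\sigma$ at $\appx_{\es}$, vary $G',G''$). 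So the discontinuity you must fight is that of $\mu$ itself, whereas the paper fights the merely piecewise Lipschitz $G''$; both discontinuities sit on the same $\hypsurf$, so your crossing/no-crossing dichotomy (intrinsic and Euclidean metric coincide when the segment avoids $\hypsurf$) and your regime splitting run parallel to the paper's cases (i)--(v), with the same lemmas producing the same $O(\delta)$ contributions and the single logarithmic term coming from Lemma \ref{it:discrete}, exactly as in the paper's estimate \eqref{eq:E15}.

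One point needs repair. In your decomposition the integrand on the crossing events contains $\mu(\appx_s)$ with $\appx_s$ unconstrained, so it is not ``merely $O(1)$'': Assumption \ref{ass:existence}.\ref{ass:existence-musigmalocbounded} bounds $\mu$ only on $\hypsurf^{\varepsilon_0}$, and a naive Cauchy--Schwarz against the regime probabilities would cost a square root and degrade the rate to order $\sqrt{(1+\log(1/\delta))\delta}$. The paper sidesteps this structurally: its constant $K_3$ in \eqref{eq:est-E1-help_bound} is finite precisely because $\mu,\sigma$ are evaluated at $x_2=\appx_{\es}\in\hypsurf^{\varepsilon_0}$, uniformly in the first argument, an advantage your splitting forfeits. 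The fix is the device of the paper's own case (ii): split each crossing event further along $\{\appx_s\in\hypsurf^{\varepsilon_0}\}$ versus its complement; on the former the integrand is genuinely bounded and your probability estimates apply at full strength, while on the latter $\|\appx_s-\appx_{\es}\|\ge\varepsilon_0-\varepsilon_1\ge 3\varepsilon_0/4$, so Markov's inequality combined with the increment bound $\E[\|\appx_s-\appx_{\es}\|^{2p}]\le K_p\,\delta^{p}$ of Lemma \ref{msq-appx} (after a Cauchy--Schwarz absorbing the linear growth of $\mu$) makes that contribution $O(\delta)$ or better. Relatedly, your appeal to Lemma \ref{max_bm} for the case $\appx_{\es}\notin\hypsurf^{\varepsilon_0}$ is not justified, since the coefficients are only of linear growth outside the tube and the Gaussian tail estimate requires their boundedness along the step; the same Markov/moment argument should be used there instead, as the paper does. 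With these routine adjustments your argument closes and yields the asserted bound $C_{\textrm{rmse}}^2\,(1+\log(1/\delta))\,\delta$.
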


\begin{proof}
The proof will be split into three steps. The first one uses the transformation $G$, the second the exit probability estimates from the previous section and the last step is a Gronwall argument.
We denote constants independent of $\delta$ by $c_1,c_2,\dots$.
\\

\textbf{Step 1:}
Here we follow \cite[Theorem 3.1]{sz2017c}, where the convergence proof is done by means of the transformation $G$, see  Subsection \ref{def_g_etal}. We define a process $Z=(Z_t)_{t\ge 0}$ by $Z_t=G(X_t)$. It solves
\begin{align}\label{eq:SDEtransf_2}
 dZ_t = \mu_G(Z_t) dt+\sigma_G (Z_t) dW_t, \quad t \geq 0, \qquad Z_0=G(x),
\end{align}
where $\mu_G$, $\sigma_G$ are given by \eqref{musig_G} and are globally Lipschitz by Lemma \ref{lem:propG_lip}. Moreover, $G$ and $G^{-1}$ are globally Lipschitz by Lemma \ref{lem:propG}(\ref{it:GLip}). So we obtain
\begin{align}\label{eq:est-lip}
 \Big( \E \Big[ \sup_{0\le t\le T} \|X_t-\appx_t \|^2 \Big] \Big)^{1/2}
\le L_{G^{-1}}  \Big( \E \Big[ \sup_{0\le t\le T} \|Z_t-G (\appx_t ) \|^2 \Big] \Big)^{1/2}.
\end{align}
Now, denote by $\appz$ the Euler-Maruyama approximation of the process $Z$ based on the step sizing function $h$ given by  \eqref{eq:step size_2} and \eqref{eq:step_bd_2}. Using this scheme we can split the error as follows:
\begin{align}\label{eq:est-dreieck}
  & \Big(\E \Big[ \sup_{0\le t\le T} \|Z_t-G (\appx_t ) \|^2 \Big] \Big)^{1/2}
 \\ & \quad \le  \Big( \E \Big[ \sup_{0\le t\le T} \|Z_t- \appz_t \|^2  \Big] \Big)^{1/2}
+ \Big( \E  \Big[\sup_{0\le t\le T} \|\appz_t-G (\appx_t ) \|^2 \Big] \Big)^{1/2}.  \nonumber
\end{align}
Since the maximum step size of $\appz$ is $\delta$,  there exists a constant $c_1>0$ such that
\begin{align}\label{eq:est-euler}
\E \Big[\sup_{0\le t\le T}\|Z_t- \appz_t\|^2 \Big]\le c_1 \cdot \delta.
\end{align}
This can be shown by a simple modification of the error analysis of the Euler-Maruyama scheme for deterministic non-equidistant discretizations as, e.g., in 
 \cite[Theorem 10.2.2]{kloeden1992}. 

The second error term in \eqref{eq:est-dreieck} is the difference between the transformation applied to the time continuous Euler-Maruyama approximation of $X$ defined in \eqref{eq:euler-timecont} and the Euler-Maruyama approximation of the transformed process $Z$ defined in \eqref{eq:SDEtransf}.
For all $\tau\in[0,T]$ set
\begin{align*}
u(\tau):=\E \Big[ \sup_{0\le t\le \tau}
\|G(\appx_{t})-\appz_{t}\|^2 \Big] .
\end{align*}
The Lipschitz continuity of $G$, $\mu_G$, and $\sigma_G$ and Lemma  \ref{msq-appx} imply that $\sup_{\tau \in [0,T]} u(\tau) < \infty$.
For all $x_1,x_2\in \R^d$ define $$\nu(x_1,x_2):=G'(x_1)\mu(x_2)+\frac{1}{2}\tr (\sigma(x_2)^\top G''(x_1)\sigma(x_2)),$$ and notice that $\nu(x,x)=\mu_G(G(x))$,
$\sigma_G(G(x))=G'(x) \sigma(x) $.

By It\=o's formula we have
\begin{align*}
G(\appx_{t})=G(\appx_{0})+\int_0^{t}\nu(\appx_s,\appx_{\es}) ds+\int_0^{t}G'(\appx_s)\sigma(\appx_{\es})dW_s, \quad t \in [0,T].
\end{align*}
With this, we get that
\begin{align*}
 u(\tau)
&=\E\left[ \sup_{0\le t\le \tau}\left\|\int_0^{t} \nu(\appx_s,\appx_{\es})ds
+\int_0^{t}G'(\appx_s)\sigma(\appx_{\es})dW_s
-\int_0^{t}\mu_G (\appz_{\es}) ds-\int_0^{t} \sigma_G (\appz_{\es}) dW_s\right\|^2\right] \\
&\,\, \le 4 \E \left[ \sup_{0\le t\le \tau} \left\|\int_0^{t} \left( \nu(\appx_s,\appx_{\es}) 
-\nu(\appx_{\es},\appx_{\es}) \right) ds\right\|^2\right] \\ & \qquad
+ 4 \E \left[ \sup_{0\le t\le \tau} \left\|\int_0^{t}\left(G'(\appx_s)\sigma(\appx_{\es})-G'(\appx_{\es})\sigma(\appx_{\es})\right)dW_s\right\|^2  \right]
\\ 
&\qquad + 4 \E \left[ \sup_{0\le t\le \tau} \left\|\int_0^{t}\left(\mu_G(G(\appx_{\es}))-\mu_G(\appz_{\es})\right) ds \right\|^2\right] \\ 
& \qquad +4 \E\left[  \sup_{0\le t\le \tau} \left\|\int_0^{t}\left( \sigma_G(G(\appx_{\es}))- \sigma_G(\appz_{\es})\right)dW_s\right\|^2\right].
\end{align*}

With the Cauchy-Schwarz inequality and the $d$-dimensional Burkholder-Davis-Gundy inequality, see, e.g., \cite[Theorem III.3.28]{karatzas1991} or \cite[Lemma 3.7]{hutzenthaler2012}, we obtain
\begin{align}
u(\tau) \leq 4T\,E_1(\tau) +8d\,E_2(\tau)+4T\,E_3(\tau)+8d\,E_4(\tau), \label{eq:g-z}
\end{align}
with
\begin{equation}\label{eq:Es}
\begin{aligned}
E_1(\tau) &= \E\left[  \int_0^{\tau}\left\|\nu(\appx_s,\appx_{\es}) 
-\nu(\appx_{\es},\appx_{\es})\right\|^2 ds \right], \\E_2(\tau) & =
\E \left[ \int_0^{\tau}\left\|G'(\appx_s)\sigma(\appx_{\es})-G'(\appx_{\es})\sigma(\appx_{\es})\right\|^2 ds\right],
\\
E_3(\tau) &=\E \left[ \int_0^{\tau}\left\|\mu_G(G(\appx_{\es}))- \mu_G(\appz_{\es}) \right\|^2 ds\right],\\
E_4(\tau)& =\E\left[  \int_0^{\tau}\left\|\sigma_G(G(\appx_{\es}))- \sigma_G(\appz_{\es})\right\|^2 ds\right].
\end{aligned}
\end{equation}
\textbf{Step 2:} Now we estimate the above error terms. 
For $E_1$, using the linear growth property of $\mu$ and $\sigma$ and the  properties of $G$ we have
\begin{align}\label{eq:est-E1-help}
\left\|\nu(x_1,x_2)-\nu(x_2,x_2)\right\|^2\le
\begin{cases}
K_1 \cdot (1+\|x_2\|^4) \cdot \|x_1-x_2\|^2,& \qquad   \,\, \|x_1 -x_2\|=\rho(x_1,x_2), \\
K_2  \cdot (1+\|x_2\|^4), &  \qquad \qquad \qquad \text{otherwise} ,
\end{cases}
\end{align}
with
\begin{align}\label{const_nu}
K_1 &= 2L_{G'}^2 C_{\mu}^2 +\frac{1}{2}L_{G''}^2  C_{\sigma} ^4, \qquad  K_2 = 	4 C_{\mu}^2 \|G'\|^2_\infty +  C_{\sigma}^4 \|G''\|_\infty^2,
\end{align}
where $C_{\mu}>0$ and $C_{\sigma}>0$ are the linear growth constants of the respective coefficients.
Note that $\|x_1 -x_2\|\neq\rho(x_1,x_2)$ means that the direct connection between $x_1$ and $x_2$ passes $\hypsurf$.
Further,  set
\begin{align} \label{eq:est-E1-help_bound} K_3=\sup \left \{  \|\nu(x_1,x_2) \|^2: \, x_2 \in \hypsurf^{\varepsilon_0}, x_1 \in \mathbb{R}^d \right \}.
\end{align}
The latter quantity is finite due to our assumptions.

We will use the following partitions of 1:
\begin{align*} 1= \1_{\{\|\appx_\es -\appx_s\|=\rho(\appx_\es,\appx_s)\}} + \1_{\{\|\appx_\es -\appx_s\| \neq \rho(\appx_\es,\appx_s)\}}\,
\end{align*} 
and
\begin{align*}
\1_{\{\|\appx_\es -\appx_s\| \neq \rho(\appx_\es,\appx_s)\}}
&=    \1_{\{\|\appx_\es -\appx_s\| \neq \rho(\appx_\es,\appx_s)\}}  \1_{\{\appx_\es \notin \hypsurf^{\varepsilon_0} \}} 
\\ & \qquad +  \1_{\{\|\appx_\es -\appx_s\| \neq \rho(\appx_\es,\appx_s)\}}  \1_{\{\appx_\es \in \hypsurf^{\varepsilon_0}\backslash \hypsurf^{\varepsilon_1} \}} 
\\ & \qquad +  \1_{\{\|\appx_\es -\appx_s\| \neq \rho(\appx_\es,\appx_s)\}}  \1_{\{\appx_\es \in \hypsurf^{\varepsilon_1}\backslash \hypsurf^{\varepsilon_2} \}} 
\\ & \qquad 
+  \1_{\{\|\appx_\es -\appx_s\| \neq \rho(\appx_\es,\appx_s)\}}\1_{\{\appx_\es \in \hypsurf^{\varepsilon_2} \}}\,
\end{align*} for a given $s\in[0,T]$,
i.e.~we split $\Omega$ first into the disjoint events that $\|\appx_\es -\appx_s\| = \rho(\appx_\es,\appx_s)$
or not, and if not,
we split again according to the distance to $\hypsurf$.

\begin{enumerate}[(i)]

\item From \eqref{eq:est-E1-help} we get that
\begin{align*}
&	\E\left[ \int_0^{\tau}  \1_{ \{ \|\appx_\es -\appx_s\|=\rho(\appx_\es,\appx_s) \}} \left\|\nu(\appx_s,\appx_{\es}) 
-\nu(\appx_{\es},\appx_{\es})\right\|^2 ds\right] \\ 
& \leq  K_1 \E \left[\int_0^{\tau} \big(1+\|\appx_{\es}\|^4 \big) \|\appx_\es -\appx_s\|^2 ds\right].
\end{align*}
An application of the Cauchy-Schwarz inequality and Lemma \ref{msq-appx} now yield
\begin{align}\label{eq:E11}
 \E \left[\int_0^{\tau}  \1_{ \{ \|\appx_\es -\appx_s\|=\rho(\appx_\es,\appx_s) \}} \left\|\nu(\appx_s,\appx_{\es}) 
-\nu(\appx_{\es},\appx_{\es})\right\|^2 ds\right]  \leq c_2 \cdot \delta.
\end{align}

\item Now consider the case that $\|\appx_\es -\appx_s\|\neq \rho(\appx_\es,\appx_s) $  and that $\appx_\es$ is more than $\varepsilon_0$ away from $\hypsurf$.
Here \eqref{eq:est-E1-help} gives
\begin{align*}
&	\E \left[\int_0^{\tau}  \1_{ \{ \|\appx_\es -\appx_s\|\neq \rho(\appx_\es,\appx_s) \}} \1_{ \{  \appx_\es \notin \hypsurf^{\varepsilon_0} \} }\left\|\nu(\appx_s,\appx_{\es}) 
-\nu(\appx_{\es},\appx_{\es})\right\|^2 ds\right]\\ 
& \leq  K_2\E\left[ \int_0^{\tau} \big(1+\|\appx_{\es}\|^4 \big) \1_{ \{ \|\appx_\es -\appx_s\|\neq \rho(\appx_\es,\appx_s) \}} \1_{ \{  \appx_\es \notin \hypsurf^{\varepsilon_0} \} }  ds\right].
\end{align*}
Since
\begin{align*}
& \left\{ \|\appx_\es -\appx_s\|\neq \rho(\appx_\es,\appx_s) \right \} \cap \left\{ \appx_\es \notin \hypsurf^{\varepsilon_0} \right\}
 \subseteq    \left\{ \appx_\es \notin \hypsurf^{\varepsilon_0}   \right\}
 \cap \left \{ \|\appx_\es -\appx_s\|\ \geq  \varepsilon_0  \right\},
\end{align*}
the Cauchy-Schwarz inequality together with \eqref{eq:est-E1-help} yields
\begin{align*}
&	\E \left[\int_0^{\tau}  \1_{ \{ \|\appx_\es -\appx_s\|\neq \rho(\appx_\es,\appx_s) \}} \1_{ \{  \appx_\es \notin \hypsurf^{\varepsilon_0} \} }\left\|\nu(\appx_s,\appx_{\es}) 
-\nu(\appx_{\es},\appx_{\es})\right\|^2 ds\right] \\ 
& \leq  K_2 \, \int_0^{\tau} \left( \E \left[ \left|1+\|\appx_{\es}\|^4 \right|^2 \right] \right)^{1/2}  \left(  \P( \|\appx_\es -\appx_s\|\geq \varepsilon_0 ) \right)^{1/2} ds.
\end{align*}
Markov's inequality, i.e.
$$  \P( \|\appx_\es -\appx_s\|\geq \varepsilon_0 )   \leq  \frac{\E [\|\appx_\es -\appx_s\|^4] }{\varepsilon_0^4},  $$
and Lemma  \ref{msq-appx} now give
\begin{align}\label{eq:E12}
&	\E\left[ \int_0^{\tau}  \1_{ \{ \|\appx_\es -\appx_s\|\neq \rho(\appx_\es,\appx_s) \}} \1_{ \{  \appx_\es \notin \hypsurf^{\varepsilon_0} \} }\left\|\nu(\appx_s,\appx_{\es}) 
-\nu(\appx_{\es},\appx_{\es})\right\|^2 ds\right] \leq c_3 \cdot \delta.
\end{align}

\item The next case is that
$\|\appx_\es -\appx_s\|\neq \rho(\appx_\es,\appx_s) $  
 and $\appx_\es$ lies in $ \hypsurf^{\varepsilon_0}\backslash \hypsurf^{\varepsilon_1}$.
Since $$ \nu(\appx_s,\appx_{\es})^2\cdot \1_{ \{  \appx_\es  \in \hypsurf^{\varepsilon_0}\backslash \hypsurf^{\varepsilon_1}  \} } \leq K_3,$$ we obtain
\begin{align*}
& \E\left[ \int_0^{\tau}   \1_{ \{  \appx_\es  \in \hypsurf^{\varepsilon_0}\backslash \hypsurf^{\varepsilon_1}  \} } \1_{ \{  \|\appx_\es -\appx_s\|\neq\rho(\appx_\es,\appx_s) \}  }\left\|\nu(\appx_s,\appx_{\es}) 
-\nu(\appx_{\es},\appx_{\es})\right\|^2 ds\right] \\ 
& \qquad \leq  2K_3 \,\int_0^{\tau} \P({\appx_\es \in \hypsurf^{\varepsilon_0}\backslash \hypsurf^{\varepsilon_1} } ; {\|\appx_\es -\appx_s\|\neq\rho(\appx_\es,\appx_s)}) ds.
\end{align*}
Since
\begin{align*}
& \left\{ \appx_\es \in \hypsurf^{\varepsilon_0}\backslash \hypsurf^{\varepsilon_1}  \right\}
 \cap \left\{ \|\appx_\es -\appx_s\|\neq\rho(\appx_\es,\appx_s) \right\}
 \subseteq    \left\{ \appx_\es \in \hypsurf^{\varepsilon_0}\backslash \hypsurf^{\varepsilon_1}  \right\}
 \cap \left \{ \|\appx_\es -\appx_s\|\ \geq \varepsilon_1  \right\},
\end{align*}
Lemma \ref{lem:regimes}\eqref{it:outer} gives
\begin{align}\label{eq:E13}
& \E\left[ \int_0^{\tau}   \1_{ \{  \appx_\es  \in \hypsurf^{\varepsilon_0}\backslash \hypsurf^{\varepsilon_1}  \} } \1_{ \{  \|\appx_\es -\appx_s\|\neq\rho(\appx_\es,\appx_s) \}  }\left\|\nu(\appx_s,\appx_{\es}) 
-\nu(\appx_{\es},\appx_{\es})\right\|^2 ds\right] \leq c_4 \cdot \delta.
\end{align}

\item
For the next case observe that
\begin{align*}
& \left\{ \appx_\es \in \hypsurf^{\varepsilon_1}\backslash \hypsurf^{\varepsilon_2}  \right\}
 \cap \left\{ \|\appx_\es -\appx_s\|\neq\rho(\appx_\es,\appx_s) \right\}
 \subseteq    \left\{ \appx_\es \in \hypsurf^{\varepsilon_1}\backslash \hypsurf^{\varepsilon_2}  \right\}
 \cap \left \{ \|\appx_\es -\appx_s\|\ \geq d(\appx_\es,\hypsurf)  \right\},
\end{align*}
and so   \eqref{eq:est-E1-help_bound}  and Lemma \ref{lem:regimes}\eqref{it:medium} yield
\begin{equation}\label{eq:E14}
\begin{aligned}
& \E\left[ \int_0^{\tau}   \1_{ \{  \appx_\es  \in \hypsurf^{\varepsilon_1}\backslash \hypsurf^{\varepsilon_2}  \} } \1_{ \{  \|\appx_\es -\appx_s\|\neq\rho(\appx_\es,\appx_s) \}  }\left\|\nu(\appx_s,\appx_{\es}) 
-\nu(\appx_{\es},\appx_{\es})\right\|^2 ds\right] \\ 
& \qquad \leq  2K_3 \,\int_0^{\tau} \P({\appx_\es \in \hypsurf^{\varepsilon_1}\backslash \hypsurf^{\varepsilon_2} } ; {\|\appx_\es -\appx_s\|\neq\rho(\appx_\es,\appx_s)}) ds \\ & \qquad \leq c_5 \cdot \delta.
\end{aligned} 
\end{equation}

\item For the final case,  the boundedness of the coefficients on $\hypsurf^{\varepsilon_0}$ and the fact that 
$$ \left \{\|\appx_\es -\appx_s\| \neq \rho(\appx_\es,\appx_s)  \right \} \cap \left\{\appx_\es \in \hypsurf^{\varepsilon_2}  \right \} \subseteq  \left \{\appx_\es \in \hypsurf^{\varepsilon_2}  \right \} $$
together with the first statement of Lemma \ref{it:discrete} yield that
\begin{equation}\label{eq:E15} \E \left[ \int_0^T     \1_{\{\|\appx_\es -\appx_s\| \neq \rho(\appx_\es,\appx_s)\}}\1_{\{\appx_\es \in \hypsurf^{\varepsilon_2} \}} \left\|\nu(\appx_s,\appx_{\es}) 
-\nu(\appx_{\es},\appx_{\es})\right\|^2  ds \right] \leq c_6 \cdot \log(1/\delta) \delta.
\end{equation}
\end{enumerate}

Combining \eqref{eq:Es} with \eqref{eq:E11}, \eqref{eq:E12}, \eqref{eq:E13}, \eqref{eq:E14}, and \eqref{eq:E15} yields
\begin{align}  E_1(\tau) \leq c_7 \cdot (1+ \log(1/\delta)) \delta. \label{eq:e1} \end{align}

For estimating $E_2$ in \eqref{eq:g-z}, we exploit that $G'$ is globally Lipschitz, $\sigma$ satisfies a linear growth condition, and use Lemma \ref{msq-appx} to obtain
\begin{equation}\label{eq:e2}
\begin{aligned}
E_2(\tau) & \le L_{G'}^2 C_{\sigma}^2 \int_0^{T}  \E  \left[ (1+ \|\appx_{\es} \|^2)   \|\appx_s-\appx_{\es} \|^2 \right] ds   \\
& \le  L_{G'}^2 C_{\sigma}^2 \int_0^{T}  \left( \E   \left[  \big| 1+ \|\appx_{\es} \|^2 \big| ^2 \right] \right)^{1/2}  \left( \E  [ \|\appx_s-\appx_{\es} \|^4] \right)^{1/2} 
ds
\le c_8 \cdot \delta.
\end{aligned}
\end{equation}

For the remaining two terms in \eqref{eq:g-z}
we use the fact that $ \mu_G,  \sigma_G$ are  globally Lipschitz by Lemma \ref{lem:propG_lip}. This gives
\begin{align}\label{eq:e3}
E_3(\tau) &\le L_{\mu_G}^2\int_0^{\tau}\E [\|G(\appx_{\es})-\appz_{\es}\|^2] ds\le L_{\mu_G}^2 \int_0^{\tau} u(s) ds,
\\ \label{eq:e4}
E_4(\tau)& \le L_{\sigma_G}^2\int_0^{\tau}\E[ \|G(\appx_{\es})-\appz_{\es} \|^2] ds\le  L_{\sigma_G}^2\int_0^{\tau}u(s)ds.
\end{align}

{\bf Step 3:} Combining \eqref{eq:g-z} with the estimates \eqref{eq:e1}, \eqref{eq:e2}, \eqref{eq:e3}, and \eqref{eq:e4} we obtain
\begin{align*}
0\le u(\tau) \le c_9 \int_0^\tau u(s) ds + c_{10} \cdot \delta (1+ \log(1/\delta)), \quad  \tau \in [0,T].
\end{align*}
Gronwall's inequality yields
\begin{align}\label{eq:est-gron}
u(\tau)\le c_{10} \exp(c_9 \tau) \cdot \delta (1+ \log(1/\delta)), \quad \tau \in [0,T].
\end{align}

Finally, combining \eqref{eq:est-dreieck} with \eqref{eq:est-euler} and \eqref{eq:est-gron}, and the result with \eqref{eq:est-lip} concludes the proof.

\end{proof}

\begin{remark}
In \cite[Theorem 3.1]{sz2017c} the authors prove strong convergence order $1/4-\epsilon$ for arbitrarily small $\epsilon>0$ of the equidistant Euler-Maruyama scheme under Assumption \ref{ass:existence} and under the additional assumption that the coefficients $\mu$ and $\sigma$ are bounded.
By applying some of the techniques from the proof of Theorem \ref{thm:conv} here, \cite[Theorem 3.1]{sz2017c} can be shown without assuming global boundedness of $\mu$ and $\sigma$.
\end{remark}

\section{Cost Analysis}
\label{sec:cost}

We now turn to  the computational cost  of our step size procedure. 
As mentioned, the computational cost of our method is  proportional to the number of steps, i.e.
\begin{align}\label{eq:cost}
N(h,\delta)= \inf \{k \in \mathbb{N}: \tau_k \geq T \}.
\end{align}
Clearly, we have
$$ N(h,\delta)
\leq 1 + \int_0^T \frac{1}{h(\appx_{\et},\delta)} dt,$$
since
$$    \int_{\tau_k}^{\tau_{k+1}} \frac{1}{h(\appx_{\et},\delta)} dt =1.   $$

\begin{theorem}\label{thm:cost} Assume Framework \ref{framework} and let $T>0$.
 Moreover, let $h\colon\R^d\times (0,1)\to (0,1)$ be given by  \eqref{eq:step size_2} and \eqref{eq:step_bd_2}, and let $X^h$ be given by \eqref{euler_anf_2} and \eqref{euler_it_2}. 
Then there exists a constant $C_{\textrm{cost}}>0$ such that 
$$ \E[ N(h,\delta) ]\leq  C_{\textrm{cost}} \cdot (1+\log(1/\delta))\delta^{-1}.$$
\end{theorem}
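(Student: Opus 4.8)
The plan is to take expectations in the elementary bound $N(h,\delta)\le 1+\int_0^T h(\appx_{\et},\delta)^{-1}\,dt$ recorded just above the statement, and to split the time integral according to the three step-size regimes determined by the last grid value $\appx_{\et}$, namely $\{\appx_{\et}\in\hypsurf^{\varepsilon_2}\}$, $\{\appx_{\et}\in\hypsurf^{\varepsilon_1}\setminus\hypsurf^{\varepsilon_2}\}$ and $\{\appx_{\et}\notin\hypsurf^{\varepsilon_1}\}$. On each of these events $h^{-1}$ is explicit, so the task reduces to bounding three time integrals, each of which I expect to be $O((1+\log(1/\delta))\delta^{-1})$.

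The two extreme regimes are immediate. In the inner regime $h\equiv\delta^2$, so the contribution equals $\delta^{-2}\int_0^T\P(\appx_{\et}\in\hypsurf^{\varepsilon_2})\,dt$, which by the first estimate of Lemma \ref{it:discrete} is at most $C(1+\log(1/\delta))\delta^{-1}$. In the outer regime $h\equiv\delta$, so the contribution is bounded trivially by $\delta^{-1}\int_0^T 1\,dt=T\delta^{-1}$. Both are of the claimed order.

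The intermediate regime is the crux. There $h^{-1}=\|\sigma\|_{\infty,\hypsurf^{\varepsilon_0}}^2\,(\log(1/\delta)/d(\appx_{\et},\hypsurf))^2$, and the difficulty is that this weight involves the distance of $\appx_{\et}$ — the value at the last grid point, on which $h$ is built — whereas the occupation time bound of Lemma \ref{it:inner_cont} controls integrals of $f(d(\appx_t,\hypsurf))$ in the \emph{continuous} time variable $\appx_t$. To bridge this mismatch I would split the intermediate event further according to whether $\|\appx_t-\appx_{\et}\|\le\tfrac12 d(\appx_{\et},\hypsurf)$ or not.

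On the ``small increment'' event the triangle inequality gives $\tfrac12 d(\appx_{\et},\hypsurf)\le d(\appx_t,\hypsurf)\le\tfrac32 d(\appx_{\et},\hypsurf)$, whence $d(\appx_{\et},\hypsurf)^{-2}\le\tfrac94 d(\appx_t,\hypsurf)^{-2}$ and $\appx_t\in\hypsurf^{3\varepsilon_1/2}$ with $d(\appx_t,\hypsurf)\ge\varepsilon_2/2$; note $3\varepsilon_1/2<\varepsilon_0/2$ by Framework \ref{framework}. This part is therefore dominated by $\tfrac94\|\sigma\|_{\infty,\hypsurf^{\varepsilon_0}}^2\log^2(1/\delta)\,\E[\int_0^T d(\appx_t,\hypsurf)^{-2}\1_{\{\appx_t\in\hypsurf^{3\varepsilon_1/2},\,d(\appx_t,\hypsurf)\ge\varepsilon_2/2\}}\,dt]$, to which Lemma \ref{it:inner_cont} applies with $f(x)=x^{-2}\1_{\{x\ge\varepsilon_2/2\}}$; the resulting $\int_{\varepsilon_2/2}^{3\varepsilon_1/2}x^{-2}\,dx\le 2/\varepsilon_2$ produces, after inserting $\varepsilon_2=\|\sigma\|_{\infty,\hypsurf^{\varepsilon_0}}\log(1/\delta)\delta$, a bound of order $\log(1/\delta)\delta^{-1}$. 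On the complementary ``large increment'' event I would instead use the crude bound $d(\appx_{\et},\hypsurf)^{-2}\le\varepsilon_2^{-2}$, so that after simplifying the prefactor via $\varepsilon_2$ the contribution is at most $\delta^{-2}\int_0^T\P(\|\appx_t-\appx_{\et}\|\ge\tfrac12 d(\appx_{\et},\hypsurf);\appx_{\et}\in\hypsurf^{\varepsilon_1}\setminus\hypsurf^{\varepsilon_2})\,dt$; here Lemma \ref{lem:regimes_extra} with $\alpha=\tfrac12$ supplies a factor $C(1+\log(1/\delta))\delta$, again yielding order $(1+\log(1/\delta))\delta^{-1}$. Summing the three regimes and adding the constant $1$ gives the claim. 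The main obstacle is precisely this $\appx_{\et}$-versus-$\appx_t$ discrepancy in the intermediate regime, and the choice $\alpha=\tfrac12$ in Lemma \ref{lem:regimes_extra} is exactly what makes the crude bound on the large-increment event affordable.
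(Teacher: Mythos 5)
Your proposal is correct and follows essentially the same route as the paper's proof: the same three-regime decomposition of $\E\bigl[\int_0^T h(\appx_{\et},\delta)^{-1}\,dt\bigr]$, the same splitting event $\{\|\appx_t-\appx_{\et}\|\le \tfrac12 d(\appx_{\et},\Theta)\}$ exploited via the $1$-Lipschitz continuity of the distance function to replace $d(\appx_{\et},\hypsurf)^{-2}$ by $d(\appx_t,\hypsurf)^{-2}$, and the same two key ingredients, namely the occupation-time estimate of Lemma \ref{it:inner_cont} with $f(x)=x^{-2}\1_{\{x\ge \varepsilon_2/2\}}$ on the small-increment event and Lemma \ref{lem:regimes_extra} with $\alpha=\tfrac12$ against the crude $\delta^{-2}$ bound on its complement. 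All constants and exponents work out as you indicate, so there is nothing to add.
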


\begin{proof}
We denote constants independent of $\delta$ by $c_1,c_2,\dots$.
We have
\begin{equation}  \label{eq:est-N}
\begin{aligned}
 \E [N(h,\delta)]&\le 1+ \E \left[\int_0^T \frac{1}{h(\appx_{\et},\delta)} dt \right] 
\\ &= 1+ I_1 +I_2 +I_3,
\end{aligned}
\end{equation}
with
\begin{equation}\label{eq:Is}
\begin{aligned}
I_1&=\delta^{-2} \int_0^T \P( \appx_{\et} \in \Theta^{\varepsilon_2}) dt,\\
I_2&=\int_0^T \E \left[\frac{1}{h(\appx_{\et}, \delta)} \1_{ \{\appx_{\et}  \in \hypsurf^{\varepsilon_1} \setminus  \hypsurf^{\varepsilon_2} \}} \right] dt  ,\\
I_3&=\delta^{-1} \int_0^T\P( \appx_{\et} \notin \Theta^{\varepsilon_1}) dt .
\end{aligned}
\end{equation}
We also have
 \begin{align} \label{est1} I_3  \leq T \cdot \delta^{-1} ,   \end{align}  
 and by Lemma \ref{it:discrete}
 \begin{align} \label{est2}
I_1\leq c_1 \cdot (1+\log(1/\delta))\delta^{-1}.
\end{align}
So, we only need to take care of the  remaining term $I_2$.
For this, consider the event that the time-continuous Euler-Maruyama scheme in one step does not move farther than $d(\appx_{\et}, \Theta)/2$ away from $\appx_{\et}$, that is 
\begin{align} \label{eq:A}
A(t)=   \Big \{ \|\appx_t - \appx_{\et}\| \leq d(\appx_{\et}, \Theta)/2 \Big \}  ,  \quad t\ge 0.
\end{align}
We use the following partition of 1:
\begin{align*}
 \1_{ \{\appx_{\et}  \in \hypsurf^{\varepsilon_1} \setminus  \hypsurf^{\varepsilon_2} \}}=
  \1_{ A(t)\cap\{\appx_{\et}  \in \hypsurf^{\varepsilon_1} \setminus  \hypsurf^{\varepsilon_2} \}}
  + \1_{ A(t)^c \cap\{\appx_{\et}  \in \hypsurf^{\varepsilon_1} \setminus  \hypsurf^{\varepsilon_2} \}}.
\end{align*}

\begin{enumerate}[(i)]
\item
The distance function $d(\cdot,\hypsurf)$  to the hypersurface  $\hypsurf$ is Lipschitz continuous with Lipschitz constant 1, see \cite[Equation (14.91)]{trud}, so we have
\[
d(x, \Theta) - d(y, \Theta) 
\leq  |  d(x, \Theta) - d(y, \Theta)  |
\leq  \|  x-y   \|, \quad x,y \in \hypsurf^{\varepsilon_0}.
\]
Hence, we observe that 
\[
\begin{split}
& d(x, \Theta) \leq 
 \|  x  - y  \|  + d(y, \Theta) , \quad x,y \in \hypsurf^{\varepsilon_0},
\end{split}
\]
which implies
\[
A(t) \cap \{\appx_{\et}\in \Theta^{\varepsilon_1} \setminus \Theta^{\varepsilon_2}   \}  \subseteq  \left \{ \frac{1}{2} d(\appx_{\et}, \Theta)  \leq d(\appx_{t}, \Theta) \leq \frac{3}{2} d(\appx_{\et}, \Theta)  \right \} \cap \{\appx_{\et}\in \Theta^{\varepsilon_1} \setminus \Theta^{\varepsilon_2}   \} .
\]
It follows that
\begin{align*}
 \frac{1}{h(\appx_{\et}, \delta)} \1_{A(t)\cap \{\appx_{\et}\in \Theta^{\varepsilon_1} \setminus \Theta^{\varepsilon_2}   \} }
& = \frac{\|\sigma\|^2_{\infty,\hypsurf^{\varepsilon_0}} (\log(\delta))^2}{d(\appx_\et,\hypsurf)^2}  \1_{A(t)\cap \{\appx_{\et}\in \Theta^{\varepsilon_1} \setminus \Theta^{\varepsilon_2}   \} } 
 \\ &  \le \frac{9}{4}\frac{\|\sigma\|^2_{\infty,\hypsurf^{\varepsilon_0}} (\log(\delta))^2}{d(\appx_t,\hypsurf)^2}  \1_{A(t)\cap \{\appx_{\et}\in \Theta^{\varepsilon_1} \setminus \Theta^{\varepsilon_2}   \}}. 
 \end{align*}
Moreover,  
\begin{align*}
  \{ \appx_{\et}\in \Theta^{\varepsilon_1} \setminus \Theta^{\varepsilon_2}   \} \cap  A(t) \subseteq \left \{ \appx_{t} \in \hypsurf^{\frac{3}{2}\varepsilon_1} \setminus  \hypsurf^{\frac{1}{2} \varepsilon_2} \right \}
\end{align*}
and so we obtain
\begin{align} \label{eq:cost-help2}
 &\frac{1}{h(\appx_{\et}, \delta)} \1_{A(t)\cap \{\appx_{\et}\in \Theta^{\varepsilon_1} \setminus \Theta^{\varepsilon_2}   \} }
  \le \frac{9}{4} \frac{\|\sigma\|^2_{\infty,\hypsurf^{\varepsilon_0}} (\log(\delta))^2}{d(\appx_t,\hypsurf)^2}  \1_{ \left \{  \appx_{t} \in \hypsurf^{\frac{3}{2}\varepsilon_1} \setminus  \hypsurf^{\frac{1}{2} \varepsilon_2} \right  \}}.
 \end{align}
 
\item Since the minimal step size is $\delta^2$, we have
\begin{align}\label{eq:cost-help3}
 \frac{1}{h(\appx_{\et}, \delta)}  \1_{A(t)^c\cap \{\appx_{\et} \in \Theta^{\varepsilon_1} \setminus \Theta^{\varepsilon_2}   \} }&\leq  \frac{1}{\delta^{2}} \1_{A(t)^c\cap \{\appx_{\et}\in \Theta^{\varepsilon_1} \setminus \Theta^{\varepsilon_2}   \} } \\
 &= \frac{1}{\delta^{2}} \1_{ \{\|\appx_t- \appx_{\et}\| > d(\appx_{\et}, \Theta)/2; \appx_{\et}(\omega) \in \Theta^{\varepsilon_1} \setminus \Theta^{\varepsilon_2}   \} } .
 \end{align}
 
\end{enumerate}

Combining \eqref{eq:Is} with \eqref{eq:cost-help2} and \eqref{eq:cost-help3} we obtain
\begin{equation} \label{eq:est-I2}
\begin{aligned}
 I_2 &  \leq \frac{9}{4} \|\sigma\|^2_{\infty,\hypsurf^{\varepsilon_0}} (\log(\delta))^2 \int_{0}^{T}  \E \left[ \frac{1}{d(\appx_t,\hypsurf)^2}\1_{  \left \{ \, \appx_{t} \in \hypsurf^{\frac{3}{2}\varepsilon_1} \setminus  \hypsurf^{\frac{1}{2} \varepsilon_2} \right  \} } \right]dt\\
&\quad+ \delta^{-2}  \int_{0}^{T} \E\left[  \1_{ \{ \appx_{\et} \in \Theta^{\varepsilon_1} \setminus \Theta^{\varepsilon_2};   \|\appx_{t} - \appx_{\et}\| > d(\appx_{\et},\Theta)/2\} }\right] dt \\
& = \frac{9}{4} \|\sigma\|^2_{\infty,\hypsurf^{\varepsilon_0}} (\log(\delta))^2 \int_{0}^{T}  \E \left[ \frac{1}{ \max \{  \varepsilon_2/2, d(\appx_t,\hypsurf) \}^2}\1_{  \left \{ \, \appx_{t} \in \hypsurf^{\frac{3}{2}\varepsilon_1}  \right  \} } \right]dt\\
&\quad+ \delta^{-2}  \int_{0}^{T} \E\left[  \1_{ \{ \appx_{\et} \in \Theta^{\varepsilon_1} \setminus \Theta^{\varepsilon_2};   \|\appx_{t} - \appx_{\et}\| > d(\appx_{\et},\Theta)/2\} }\right] dt \\
 &=: I_{21} + I_{22}.
\end{aligned}
\end{equation} 
By  Lemma \ref{lem:regimes_extra} with $\alpha=1/2$  we obtain that there exists a constant $c_2>0$ such that
\begin{align} \label{est31}
I_{22} \leq c_{2} \cdot(1+ \log(1/\delta)) \delta^{-1}.
 \end{align}
 Lemma \ref{it:inner_cont} yields that there exist constants $c_3,c_4>0$ such that

\begin{equation}\label{est32}
 I_{21} \leq c_3 \left( 2 \varepsilon_2^{-1 }+ \int^{\frac{3}{2}\varepsilon_1}_{\frac{1}{2}\varepsilon_2} \frac{\|\sigma\|^2_{\infty,\hypsurf^{\varepsilon_0}} (\log(\delta))^2}{x^2} dx \right)
 \le c_4 \cdot \log(1/\delta) \delta^{-1}.
  \end{equation}
 Combining \eqref{eq:est-I2} with \eqref{est31} and \eqref{est32} ensures the existence of a constant $c_5>0$ such that
 \begin{align} \label{est3}
I_{2} \leq c_5 \cdot  (1+\log(1/\delta)) \delta^{-1}.
 \end{align}
 
Now, \eqref{eq:est-N} together with the estimates \eqref{est1}, \eqref{est2}, and \eqref{est3} yields the assertion.
\end{proof}

\section{Examples}
\label{sec:examples}

In this section we present some numerical examples to complement our asymptotic convergence analysis with a study of the non-asymptotic regime.
For all examples we choose for simplicity $T=1$. We use
$$ \operatorname{cost}(\delta)= \frac{1}{M} \sum_{i=1}^{M}  N(h,\delta)^{(i)},$$
where $M\in\N$ is the sample size,
 to estimate the computational cost and
$$    \operatorname{msq}(\delta) =  \frac{1}{M} \sum_{i=1}^{M}  \left\| \big(X_1^{h(\cdot,\delta)} - X_1^{h(\cdot, 2\delta)} \big)^{(i)} \right\|^2$$
to estimate the convergence rate. The latter is justified (for dyadic $\delta$) by the fact that if there exist $\beta\in\R$, $\gamma\in(0,\infty)$ such that
\begin{align*}
 \limsup_{n \rightarrow \infty} \big( n^{\beta}(2^{n})^{\gamma} \big) \cdot  \E \left \|X_1-X_1^{h(\cdot,2^{-n})} \right \|^2 < \infty,
  \end{align*}
  then also
 \begin{align*}
\limsup_{n \rightarrow \infty} \big( n^{\beta} (2^{n})^{\gamma} \big) \cdot \E \left \|X_1^{h(\cdot,2^{-n-1})}-X_1^{h(\cdot,2^{-n})} \right \|^2 < \infty,
\end{align*}
and vice versa. Above we use the standard convention that $Y^{(i)}$ denotes an iid copy of a random variable $Y$.

For both quantities we choose $\delta=2^{-2},2^{-3}, \ldots, 2^{-10}$, $M=5\cdot 10^4$ and perform a regression 
using the ansatz $$ f(\delta)= c_1 \cdot \log(1/\delta)^{c_2} \cdot \delta^{c_3}$$ to determine $c_1>0$ and $c_2,c_3 \in \mathbb{R}$.\\

Our first test equation is a scalar equation with
$$ \mu(x)= -2 \cdot \1_{(-\infty,0)}(x) + x^2 \cdot \1_{[0,1)}(x)+ \left( \frac{2}{x}-\frac{3}{x^2} \right) \cdot \1_{[1,\infty)}(x),  \qquad \sigma(x)=0.5 \cdot \left(1+ \frac{1}{1+x^2} \right),$$
and initial value $x=1.5$.
For the cost of the Euler-Maruyama scheme we obtain
$$f_{\operatorname{cost}}(\delta)= 1.2014 \cdot  \log(1/\delta)^{0.8936} \cdot \delta^{-1.1218} ,$$
with a residuum of ${\tt res}=6.0412\cdot 10^2$,
while for the mean square error we have
$$ f_{\operatorname{msq}}(\delta) =  0.5940\cdot\log(1/\delta)^{-2.0209} \cdot \delta^{1.1037} ,$$
with a residuum of ${\tt res}=1.0674 \cdot 10^{-2}$.
This is in good accordance with the predicted asymptotic behaviour from Theorem \ref{thm:conv}, respectively  Theorem \ref{thm:cost}. \\

The second  test equation is again a scalar equation with
$$ \mu(x)= -1 \cdot \1_{(-\infty,-1)}(x) +1 \cdot \1_{[-1,2)}(x) -2x \cdot \1_{[2,\infty)}(x),  \qquad \sigma(x)=1,$$
and $x=0$, i.e.~an equation with additive noise.
Here we have
$$f_{\operatorname{cost}}(\delta)=    0.9148 \cdot  \log(1/\delta)^{0.5163} \cdot \delta^{-1.1380} ,$$
with a residuum of ${\tt res}=2.0217 \cdot 10^2$ and
$$ f_{\operatorname{msq}}(\delta) =  
  21.2638 \cdot\log(1/\delta)^{-1.8354} \cdot \delta^{1.5232} ,$$
with a residuum of ${\tt res}=2.2471\cdot 10^{-2}$.
The increase in the observed empirical convergence order is not surprising. Convergence order $3/4$ for the equidistant Euler-Maruyama scheme with additive noise has already been indicated by the simulation results in \cite{GLN2017}.
However, the latter work also indicates that discontinuous coefficients may lead to an unstable behaviour of $ {\operatorname{msq}}(\delta)$. 
To determine sharp bounds for the convergence order of the Euler-Maruyama scheme for  SDEs with additive noise and discontinuous coefficients will be part of our future research.\\

 The final test equation is two-dimensional with degenerate noise, i.e.~we have
\begin{align*}
 \mu(x_1,x_2)=\begin{cases}
(1,1)^\top, & x_1^2+x_2^2 \geq 1,\\
(-x_1,x_2)^\top, & x_1^2+x_2^2 < 1 ,
          \end{cases}
\qquad 
 \sigma(x_1,x_2)= \frac{1}{2}\begin{pmatrix}x_1 & 0 \\ x_2 & 0 \end{pmatrix},
         \end{align*}
and initial value $x=(0.5,0.5)^\top$.
We obtain
$$f_{\operatorname{cost}}(\delta)=    1.7280 \cdot  \log(1/\delta)^{0.7362} \cdot \delta^{-1.0248}, $$
with a residuum of ${\tt res}=2.3685 \cdot 10^1$ and
$$ f_{\operatorname{msq}}(\delta) =  
  11.9163 \cdot\log(1/\delta)^{-2.2178} \cdot \delta^{1.0389} ,$$
with a residuum of ${\tt res}=1.6205 \cdot 10^{-1}$.
This is again in good accordance with our analysis.


\section*{Acknowledgements}
The authors are grateful to Mike Giles for numerous discussions. The timestepping strategy of this paper has been inspired by work of M.~Giles and K.~Ramanan on adaptive Multilevel Monte Carlo timestepping strategies for reflected diffusions.
Furthermore, the authors are also grateful to Martin Schmidt for a discussion about  geometric aspects of this work
and to Larisa Yaroslavtseva for pointing out to us an inaccuracy in the proof of a lemma.

M.~Sz\"olgyenyi is supported by the AXA Research Fund grant "Numerical Methods for Stochastic Differential Equations with Irregular Coefficients with Applications in Risk Theory and Mathematical Finance".
A part of this article was written while M.~Sz\"olgyenyi was affiliated with the Seminar for Applied Mathematics and the RiskLab Switzerland, ETH Zurich, R\"amistrasse 101, 8092 Zurich, Switzerland and with the Institute of Statistics and Mathematics, Vienna University of Economics and Business, Welthandelsplatz 1, 1020 Vienna, Austria, and supported by the Vienna Science and Technology Fund (WWTF): Project MA14-031.

Moreover, part of this work was carried out while A.~Neuenkirch and M.~Sz\"olgyenyi were guests of the Erwin Schr\"odinger International Institute (ESI) for Mathematics and Physics in Vienna, whose support and hospitality is gratefully acknowledged.



\vspace{2em}
\centerline{\underline{\hspace*{16cm}}}

 \noindent Andreas Neuenkirch  \\
Institut f\"ur Mathematik, Universit\"at Mannheim,
B6, 26, 
68131 Mannheim, Germany\\
neuenkirch@math.uni-mannheim.de\\

\noindent Michaela Sz\"olgyenyi \\
Department of Statistics, University of Klagenfurt, Universit\"atsstra\ss{}e 65--67,
9020 Klagenfurt, Austria\\
michaela.szoelgyenyi@aau.at\\

 \noindent Lukasz Szpruch \\
 School of Mathematics, University of Edinburgh,
Peter Guthrie Tait Road,
Edinburgh EH9 3FD,  Great Britain\\
 l.szpruch@ed.ac.uk \\


\end{document}